\newtheorem{theorem}{Theorem}[section]
\newtheorem{lemma}[theorem]{Lemma}
\newtheorem{proposition}[theorem]{Proposition}
\newtheorem{theostar}{Theorem}
\theoremstyle{definition}
\newtheorem{remark}[theorem]{Remark}
\def\CC{{\mathbb C}}
\def\ZZ{{\mathbb Z}}
\def\NN{{\mathbb N}}
\def\QQ{{\mathbb Q}}
\def\RR{{\mathbb R}}
\def\HH{{\mathbb H}}
\def\sll{\mathfrak{sl}}
\def\sl2{\sll_2\CC}
\newcommand{\mC}{\mathbb{C}}
\newcommand{\mR}{\mathbb{R}}
\newcommand{\bm}{\begin{pmatrix}}
\newcommand{\ema}{\end{pmatrix}}
\newcommand{\bsm}{\left(\begin{smallmatrix}}
\newcommand{\esm}{\end{smallmatrix}\right)}
\newcommand{\Hom}{\operatorname{Hom}}
\newcommand{\SL}{\operatorname{SL}}
\newcommand{\PSL}{\operatorname{PSL}}
\newcommand{\GL}{\operatorname{GL}}
\newcommand{\SU}{\operatorname{SU}}
\newcommand{\Vol}{\operatorname{Vol}}
\newcommand{\benu}{\begin{enumerate}}
\newcommand{\eenu}{\end{enumerate}}
\newcommand{\todeux}{\tau^{(2)}(M, \rho)}
\newcommand{\detn}{\det\nolimits}
\def\op{\operatorname}
\newcommand{\wdt}[1]{\widetilde{#1}}
\newcommand{\ovl}[1]{\overline{#1}}
\newcommand{\sslash}{\mathbin{/\mkern-6mu/}}
\newcommand{\tr}{\op{tr}}
\newcommand{\neu}{\mathcal N}
\newcommand{\rel}{\mathrm{rel}}
\newcommand{\interval}[4]{
  \ifthenelse{ \equal{#1}{o} } {\mathopen{]}} {\mathopen{[}}
  #2, #3
  \ifthenelse{ \equal{#4}{o} } {\mathclose{[}} {\mathclose{]}}
}
\newcommand{\black}{\color{black}}
\title{Twisted $L^2$-torsion on the character variety}
\author{Leo Benard and Jean Raimbault}
\address{Mathematisches Institut,
Georg-August Universit\"at,
Bunsenstrasse 3-5
D-37073 G\"ottingen,  
Allemagne}
  \email{leo.benard@mathematik.uni-goettingen.de}
 \address{Institut de Math\'ematiques de Toulouse ; UMR5219 \\ Universit\'e de Toulouse ; CNRS \\ UPS IMT, F-31062 Toulouse Cedex 9, France}
  \email{Jean.Raimbault@math.univ-toulouse.fr}
\thanks{}
\begin{document}

\begin{abstract}
We define a twisted $L^2$-torsion on the character variety of 3-manifold $M$ and study some of its properties. In the case where $M$ is hyperbolic of finite volume, we prove that the $L^2$-torsion is a real analytic function on a neighborhood of any lift of the holonomy representation. 
%Then we give a formula for this invariant in terms of the volume and length functions.  
\end{abstract}

\maketitle

%\subjclass[2000]{57M25} 

\section{Introduction}

% !TEX root = twistedL2.tex

Let $M$ be a compact oriented 3--manifold with $\mu$ toroidal boundary components, $\pi =~\pi_1(M)$ its fundamental group, and assume that its interior admits a complete hyperbolic metric. Then the riemannian volume $\Vol(M)$ given by this metric is finite and it is a topological invariant of $M$ by the Mostow--Prasad rigidity theorem. The complete hyperbolic structure lifts to (a finite set of) representations $\rho_0 : \pi \to \SL_2(\CC)$ (the \textit{holonomy representation}).
% More generally, to any representation $\rho\colon \pi_1(M) \to \SL_2(\mC)$ the volume function assigns a real number $\Vol(\rho)$, such that $\Vol(\rho_0)= \Vol(M)$.

Another topological invariant of $M$ is the \emph{combinatorial} $L^2$-torsion $\tau^{(2)}(M)$. It is defined in a manner similar to the classical combinatorial torsions but using a chain complex for the universal cover with the action of the fundamental group, and the Fuglede--Kadison determinants of equivariant operators. For hyperbolic manifolds it can be computed using analytic means, by a result of Wolfgang L\"uck and Thomas Schick \cite{LS}. It follows that $$\tau^{(2)}(M) = e^{-\frac{\Vol(M)}{6\pi}}.$$
Given a linear representation of the fundamental group of $M$ we can use it to twist the chain complex and for the holonomy representation of a hyperbolic 3--manifold the result of L\"uck--Schick has been extended in the (as yet unpublished) PhD thesis of Benjamin Wa\ss erman (see \cite{Wasserman} where the corresponding result is announced). In this paper we want to check that the $L^2$-torsion $\todeux$ twisted by an $\mathrm{SL}_2(\CC)$-representation $\rho$ is well-defined (at least near the holonomy), and to study the properties of the function $\rho \mapsto \todeux$ defined on (part of) the $\mathrm{SL}_2(\CC)$-character variety. We will discuss the extension of the formula in terms of volume below. 

\subsection{Well-definedness and regularity of the $L^2$-torsion function} Such a twisted $L^2$-torsion was defined (for hyperbolic knot complements) in Weiping Li's and Weiping Zhang's paper \cite{Li_Zhang}. However, the non-triviality of this invariant was never addressed in this reference, as it is not established there that the relevant complexes are of determinant-class. We remedy to this and observe that in fact the argument used to check well-definedness also implies the following result. 

\begin{theostar} \label{theo:Analyticity}
  The twisted $L^2$-torsion function 
  \[
  \todeux \colon X(M) \to \mR_{\ge 0}
  \]
  is real-analytic on an open neighborhood\footnote{in the analytic topology} $U$  of a holonomy character $[\rho_0]$ of $M$ in its character variety $X(M)$.
\end{theostar}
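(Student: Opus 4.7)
The plan is to write the twisted $L^2$-torsion as an alternating sum
\[
\log \todeux = \sum_i (-1)^i \log \det{}_{FK}(\partial_i^\rho)
\]
over the combinatorial differentials of the $\rho$-twisted $\ell^2$-chain complex, and to establish real-analyticity of each term separately on a neighborhood of $\rho_0$. First I would fix a finite $\pi$-equivariant cell decomposition of $\tM$. The twisted complex has modules $(\ell^2(\pi) \otimes \CC^2)^{n_i}$ and differentials $\partial_i^\rho$ represented by matrices with entries of the form $\sum_{g \in F_i} z_g\, \rho(g) \in \mathcal{N}(\pi) \otimes \End(\CC^2)$, with $F_i \subset \pi$ finite and $z_g \in \CC$. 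These entries are polynomial---hence real-analytic---in the matrix coefficients of $\rho \in R(M) = \Hom(\pi, \SL_2(\CC))$. Near a smooth point of $X(M)$, such as a lift of the holonomy character, the projection $R(M) \to X(M)$ admits a local analytic section, so it suffices to work upstairs on $R(M)$.

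The key analytic formula I would use is
\[
\log \det{}_{FK}(\partial_i^\rho) = \frac{1}{2}\int_0^\infty \left(\frac{c_i}{1+t} - \tau_{\mathcal N}\bigl((\Delta_i^\rho + t)^{-1}\bigr)\right) dt,
\]
obtained from $\log \det_{FK}(A) = \tfrac12 \tau_{\mathcal N}(\log(A^*A))$ combined with the elementary identity $\log x = \int_0^\infty (\tfrac{1}{1+t} - \tfrac{1}{x+t})\,dt$, where $\Delta_i^\rho = (\partial_i^\rho)^* \partial_i^\rho$ and $c_i = \tau_{\mathcal N}(\Id)$ is the appropriate rank constant. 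The integrand depends real-analytically on $\rho$ by standard analytic perturbation theory, since the family of resolvents $(\Delta_i^\rho + t)^{-1}$ depends analytically on its analytic input $\partial_i^\rho$. Real-analyticity of the integral thus reduces to proving uniform convergence in $\rho$ on a neighborhood $U$ of $\rho_0$.

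The main obstacle will be controlling the integrand as $t \to 0^+$: uniform convergence there amounts to a uniform determinant class bound on $U$. At $\rho_0$ itself this is exactly the well-definedness of $\tau^{(2)}(M,\rho_0)$. The argument proving well-definedness produces an effective estimate on the spectral density $N_{\Delta_i^{\rho_0}}(\lambda)$ near $\lambda = 0$; combining this with operator-norm continuity of $\rho \mapsto \Delta_i^\rho$ and a min--max comparison of positive self-adjoint operators should yield a uniform bound on $N_{\Delta_i^\rho}(\lambda)$ for $\rho$ in a small enough neighborhood $U$ of $\rho_0$. Once uniform convergence of the integral on $U$ is established, differentiation under the integral sign to any order in $\rho$ delivers the claimed real-analyticity of $\rho \mapsto \log \det_{FK}(\partial_i^\rho)$, and summing over $i$ completes the proof. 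The delicate point, which I expect to require most of the technical work, is the transfer of the spectral-density estimate from $\rho_0$ to a neighborhood, since the twisted Laplacians are not expected to have a spectral gap at $0$.
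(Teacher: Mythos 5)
Your overall framework --- pass to $R(M)$ through a local analytic section, observe that the entries of the twisted differentials are polynomial in the matrix coefficients of $\rho$, and reduce analyticity of $\todeux$ to analyticity of Fuglede--Kadison determinants of the twisted Laplacians --- matches the paper's. The genuine gap is precisely in the step you flag as ``delicate'', and the fix you sketch would not work as stated. You posit that the twisted Laplacians have no spectral gap at $0$ and propose to transfer a spectral-density estimate from $\rho_0$ to nearby $\rho$ via operator-norm continuity and min--max. But comparing a positive operator with an operator-norm perturbation of size $\varepsilon$ only yields an estimate of the shape $N_{\Delta^\rho}(\lambda)\le N_{\Delta^{\rho_0}}(\lambda+C\varepsilon)$, which gives no decay of the spectral density at scales $\lambda\lesssim\varepsilon$; for any fixed $\rho\neq\rho_0$ this bound is vacuous as $\lambda\to 0$, so no uniform determinant-class bound on a neighborhood follows. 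Moreover, determinant class at $\rho_0$ itself (which holds because the holonomy can be conjugated into $\SL_2(\ovl\QQ)$, via approximation arguments) does not come with the kind of effective, perturbation-stable estimate on $N_{\Delta^{\rho_0}}(\lambda)$ near $0$ that your transfer step would require.

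The missing idea --- and the heart of the paper's proof --- is that, contrary to your closing sentence, the twisted Laplacians \emph{do} have a uniform spectral gap at $0$ for all $\rho$ near the holonomy. At $\rho=\rho_0$ this comes from strong acyclicity in the sense of Bergeron--Venkatesh: the analytic Laplacians on $\CC^2$-valued forms on $\HH^3$ twisted by $\rho_0$ have spectrum bounded below by some $\delta>0$. This is transported to the combinatorial relative cochain Laplacians by a Whitney map, which is a chain map bounded above and below between the twisted $\ell^2$-cochain complex and the Sobolev--de Rham complex (so Rayleigh quotients are comparable), and then to the chain Laplacians by Poincar\'e duality. Once the gap holds at $\rho_0$, operator-norm continuity of $\rho\mapsto\Delta_p(\rho)$ together with continuity of $T\mapsto\inf\sigma(T)$ yields a gap $\ge\lambda_0>0$ on a whole neighborhood $U$; determinant class is then automatic, and analyticity of $\rho\mapsto\det\nolimits_{\neu\pi}\Delta_p(\rho)$ follows from a soft lemma (for an analytic family of positive operators with spectra in a fixed compact subset of $\mathopen{]}0,+\infty\mathclose{[}$, compose the power series of $\log$ with the operator-valued power series and apply the trace), with no need for your integral representation, the uniform convergence as $t\to 0^+$, or differentiation under the integral sign --- which, on its own, would in any case only give smoothness rather than real-analyticity without additional Cauchy-type estimates.
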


This result fits within a broader program. In general, the question of defining $L^2$-torsions for twisted $L^2$-chain complexes, and studying its continuity in the twisting representation, is asked by L\"uck in \cite{Lueck_twisted} (see e.g. Problem 10.11 there). It has been answered positively by Yi Liu \cite{Liu_L2alex} in the ``abelian'' case (for coefficients coming from 1-dimensional representations). Our result gives a limited answer to this problem in a non-abelian setting; however the tools we use are too weak to be applied to the general problem of continuity.

Theorem \ref{theo:Analyticity} is local in character, that is we do not know an explicit description of the locus $U$. Our argument is not well-suited for this purpose, as we use a spectral gap condition to check the determinant-class condition (see Lemma \ref{gap}) and we establish the latter through a continuity argument. It would be interesting to prove this spectral gap (if it holds) directly for all holonomies of cone-manifold structures; ideally the theorem would be proven to hold on the whole \textit{Dehn surgery space} (see \cite{Hodgson_Kerckhoff}).
%, if not on the whole character variety (we see no reason why it should hold everywhere). The $L^2$-torsion of a manifold twisted by arbitrary coefficients was studied in \cite{Lueck_twisted} in a general context; however it does not seem clear why the \textit{determinant class} condition (of which the spectral gap is the strongest form) should hold on the whole character variety (it does hold on the dense subset corresponding to character with values in $\ovl\QQ$, see \cite[Chapter 13]{Lueck}).
On the rest of the character variety we see no reason why there should be such a spectral gap and we see no other way to check the determinant-class condition, outside of the (dense) subset of characters with values in $\ovl\QQ$, where the determinant class holds by well-known arguments (see \cite[Chapter 13]{Lueck})

Let us say a few more words on the proof of Theorem \ref{theo:Analyticity}. As we said above the main ingredient is to establish a spectral gap for the relevant operators; we immediately deduce this (by a standard continuity argument) from the result for the holonomy representation which is essentially established in by Nicolas Bergeron and Akshay Venkatesh in \cite{BV} (under the name of ``strong acyclicity''). The analyticity of $\todeux$ on this neighbourhood then follows by a general result (Lemma \ref{analytic_det}, which seems standard but we could not find in the literature) and the regularity of the twisted Laplacians on the character variety (as operator-valued functions).

%The $L^2$-torsion is in general non-constant on the character variety. Consider a hyperbolic manifold $M$, and assume that a unitary representation $\varrho$ lies in the same component that a lift $\rho_0$ of the holonomy representation.\footnote{This is true for the figure-eight knot.} As mentioned above, we have $\tau^{(2)}(\rho_0)= e^{-\frac {11}{12\pi}\Vol(M)}$ while it follows from our Proposition \ref{prop:Torsion} (2) that $\tau^{(2)}(\varrho) = e^{-\frac 1 {3\pi} \Vol(M)}$. 

\subsection{The $L^2$-torsion function and geometric invariants} In view of the results of L\"uck--Schick and their extension it is natural to ask whether there is a relation between this kind of twisted $L^2$-torsion and the volume function $\Vol$ defined on $X(M)$ (see also \cite[p. 486]{Lott} and \cite[p. 248]{Li_Zhang}). 

Using the relation between the twisted $L^2$-torsion and the volume for the family of hyperbolic manifolds obtained by Dehn surgery on $M$ and a surgery formula, one might expect an expression of the $L^2$-torsion function as the volume function plus a sum of lengths of the lengths functions $L_i, i=1, \ldots \mu$  appearing in \cite{Neumann_Zagier} which for our purposes can be defined as follows: if $\rho$ is the holonomy of a cone-manifold structure on a Dehn filling of $M$ then $L_i(\rho)$ equals the length of the core curve of the $i$-th cusp.

Unfortunately the surgery formula only computes the $L^2$-torsion of some intermediate cover of $M$, and we do not see how to extract some information on the $L^2$-torsion of the universal cover from the latter. However, \black the L\"uck--Schick result and continuity of $\todeux$ imply that $\log\todeux = -\frac {11} {12\pi} \Vol(\rho) + o(1)$ near the holonomy representation $\rho_0$. A corollary of the analyticity of the $L^2$-torsion proved in Theorem \ref{theo:Analyticity} and of \cite[Equation (3)]{Neumann_Zagier} is the following first-order approximation: 
\begin{equation} \label{equa:main}
\log\todeux= -\frac {11} {12\pi} \Vol(\rho) + \frac {11}{24} \sum_{i=1}^\mu L_i(\rho) + O(\max L_i(\rho)^2). 
\end{equation}
% A very rough justification for this is that, contrary to $\todeux$ according to Theorem \ref{theo:Analyticity}, the functions $\Vol$ and $L_i$ are singular at $[\rho_0]$ (see e.g. \cite[Lemma 9]{Pitsch_Porti}) but the linear combination (cf. \cite[Equation (3)]{Neumann_Zagier}) is the unique one that erases the first-order singularity. In the concluding section of this paper we give evidences supporting this question. We note in particular that this implies that $\todeux$ is not constant in a neighbourhood of the holonomy. This seems non-trivial to observe, as we do not know an exact formula outside of these points.
We do not know a closed formula for the twisted $L^2$-torsion outside of the holonomy representation and so we are not even able to determine whether it is non-constant in a neighbourhood of the holonomy representation. Computing higher-order terms in \eqref{equa:main} would be an interesting related question.
Note that it follows from our Proposition \ref{prop:Torsion} \eqref{unitary} that $\tau^{(2)}(\varrho) = e^{-\frac 1 {3\pi} \Vol(M)}$ for unitary representations $\rho$, hence the torsion is not constant on the whole character variety in general.

\subsection{$L^2$-torsion function for non-hyperbolic manifolds}
Our definition of the twisted $L^2$-torsion is rather general, and it can be considered on the character variety of any (3-)manifold. The main point is to ensure that this invariant is not trivial (i.e. zero) one needs to check some delicate conditions to hold, namely that the complex under consideration is \emph{weakly acyclic} and of \emph{determinant class}. It is achieved in several cases in this paper. Aside of hyperbolic manifolds, we consider the case where $N$ is a Seifert fibered manifold. We prove:

\begin{proposition} \label{prop:Seifert}
  Let $N$ be a compact Seifert manifold. Let $\rho$ be an irreducible representation of $\pi_1(N)$. Then the complex $C_*(N, \rho)$ is weakly $L^2$-acyclic and of determinant class, and $\tau^{(2)}(N; \rho) = 1$. 
\end{proposition}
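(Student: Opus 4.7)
The plan is to exploit the Seifert fiber structure together with Schur's lemma to reduce the computation to a product-bundle situation. The fundamental group $\pi := \pi_1(N)$ fits in a short exact sequence
\[
1 \to \langle t \rangle \to \pi \to \pi_1^{\op{orb}}(B) \to 1,
\]
where $t$ is the class of a regular Seifert fiber and $B$ is the base $2$-orbifold. When $\pi$ is infinite the class $t$ has infinite order, and $t$ is central in $\pi$ if $B$ is orientable; otherwise it is central in a canonical index-two subgroup. Using multiplicativity of $L^2$-torsion under finite coverings---together with the fact that weak $L^2$-acyclicity and determinant class descend from covers---I would first reduce to the case where $t$ is central in $\pi$. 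The irreducibility of $\rho$ and Schur's lemma then force $\rho(t) = \pm \Id$, so $\rho(t^2) = \Id$; replacing $t$ by $t^2$, one may assume $\rho(t) = \Id$, and $\rho$ factors through a representation $\bar\rho$ of the orbifold group $\pi_1^{\op{orb}}(B)$.

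With this triviality along the fiber, a CW-structure on $N$ subordinate to the fibration---each cell a product $e_B \times S^1$ over a cell $e_B$ of the base---yields a tensor decomposition of the twisted $L^2$-chain complex
\[
C_*^{(2)}(N; \rho) \;\cong\; C_*^{(2)}(B; \bar\rho) \otimes_\CC C_*^{(2)}(S^1),
\]
where $C_*^{(2)}(S^1) = [\ell^2(\ZZ) \xrightarrow{1-t} \ell^2(\ZZ)]$. The circle factor is weakly $L^2$-acyclic and of determinant class with $\det_{\cN(\ZZ)}(1 - t) = 1$ (the Mahler measure of $1 - z$ vanishes). By the $L^2$-K\"unneth formula the tensor product is weakly $L^2$-acyclic and of determinant class; the standard product formula for $L^2$-torsion then gives
\[
\log \tau^{(2)}(N; \rho) \;=\; \chi^{(2)}(B; \bar\rho) \cdot \log \tau^{(2)}(S^1) \;+\; \chi^{(2)}(S^1) \cdot \log \tau^{(2)}(B; \bar\rho) \;=\; 0,
\]
since $\log \tau^{(2)}(S^1) = 0$ and $\chi^{(2)}(S^1) = 0$.

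The main technical difficulty will be justifying the tensor decomposition cleanly when $B$ has orbifold points corresponding to exceptional Seifert fibers. The most economical route is to first pass to a finite regular cover $N' \to N$ on which the fibration becomes a genuine principal $S^1$-bundle over a closed surface $B'$; the decomposition above is transparent there, and the statement for $N$ follows by multiplicativity of $L^2$-torsion under finite coverings, together with an inductive handling of the (possibly reducible) restriction $\rho|_{\pi_1(N')}$---decomposed into a direct sum of irreducibles, each treated separately and combined via additivity of $L^2$-torsion under direct sums. The case of finite $\pi$ (spherical space forms) needs a separate treatment, since there $\langle t \rangle$ is only finite cyclic; the $L^2$-complex reduces to an ordinary Reidemeister complex, and the claim follows from the classical computation of twisted Reidemeister torsion of spherical space forms, again exploiting the $S^1$-symmetry.
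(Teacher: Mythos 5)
Your reduction hinges on the claim that, after passing to a suitable finite cover, the twisted chain complex splits as a tensor product $C_*^{(2)}(B;\bar\rho)\otimes_\CC C_*^{(2)}(S^1)$, so that the K\"unneth/product formula applies. This fails in general: a finite cover of a Seifert manifold on which the fibration becomes a genuine principal $S^1$-bundle over a closed surface is still not a product unless the Euler number vanishes, and for closed Seifert manifolds with nonzero (orbifold) Euler number --- e.g.\ Nil-manifolds or $\widetilde{\mathrm{SL}}_2$-manifolds such as unit tangent bundles of hyperbolic surfaces --- \emph{no} finite cover is homeomorphic to $\Sigma\times S^1$. Concretely, $\pi_1(N)$ is then a nontrivial central extension $1\to\ZZ\to\pi_1(N)\to\pi_1(\Sigma)\to 1$, not a direct product, so neither the CW structure nor the von Neumann algebra $\neu\pi$ factors, and there is no tensor decomposition of the $\ell^2$-completion to which an $L^2$-K\"unneth formula could be applied. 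Your argument does go through for Seifert pieces with nonempty boundary (circle bundles over surfaces with boundary are trivial), but the proposition as stated includes closed Seifert manifolds, where this is a genuine obstruction. The way around it (and the route the paper takes) is to avoid any product structure altogether: work with a presentation or a $2$-complex adapted to the Seifert structure and observe that all the matrices whose Fuglede--Kadison determinants enter the torsion are built from operators $1-\gamma\otimes\rho(\gamma)$ with $\rho(\gamma)$ of finite order (since irreducibility and centrality of the fiber force $\rho(h)=\pm\Id$, hence $\rho(q_i)^{p_i}=\pm\Id$), so every such determinant equals $1$ regardless of the Euler number.

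Two further points need repair even where the product structure is available. First, if $\rho(t)=-\Id$ then $\rho$ does not factor through $\pi_1^{\mathrm{orb}}(B)$ after ``replacing $t$ by $t^2$''; it factors only through the $\ZZ/2$-central extension $\pi/\langle t^2\rangle$, so you must genuinely pass to the fiberwise double cover (another Seifert manifold) and invoke multiplicativity under finite covers, which should be said explicitly. Second, the restriction of an irreducible $\SL_2(\CC)$-representation to a finite-index subgroup may be reducible \emph{without} being a direct sum of irreducibles, so ``additivity under direct sums'' is not available; one needs the short-exact-sequence argument for the torsion of extensions, and even then the torsion of the cover with reducible coefficients is not automatically $1$ --- it equals a power of the modulus of an eigenvalue of the image of the fiber, and is $1$ here only because $\rho(h)=\pm\Id$ keeps those eigenvalues unitary. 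This must be argued, not assumed; the same unitarity observation is what makes the paper's direct computation work, and it also covers the reducible-on-the-cover situation you defer to an unspecified induction.
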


From Proposition \ref{prop:Seifert} we deduce a JSJ formula for the twisted $L^2$ torsion:

\begin{theostar} \label{seifert}
  Let $M$ be a compact aspherical 3--manifold and $N_1, \ldots, N_r$ be the hyperbolic components in the JSJ decomposition of $M$. Let $\rho \colon \pi_1(M) \to \SL_2(\mathbb C)$ be a representation such that for any $i$, the complex $C_*^{(2)}(N_i,  \rho\vert_{\pi_1(N_i)})$ is $L^2$-acyclic and of determinant class and that for any Seifert piece $N\subset M$, the restriction $\rho\vert_{\pi_1(N)}$ is irreducible. We have
  \[
  \tau^{(2)}(M ; \rho) = \prod_{i=1}^r \tau^{(2)}(N_i ; \rho).
  \]
  In particular if $M$ is a graph manifold then $\tau^{(2)}(M ; \rho) =~1$ for any representation $\rho$ %non-trivial on the generic fibers.
  that restricts to an irreducible representation on each Seifert fibered piece.
\end{theostar}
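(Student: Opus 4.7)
The plan is to establish a Mayer--Vietoris / gluing formula for the twisted $L^2$-torsion along the JSJ tori, and combine it with Proposition~\ref{prop:Seifert}. Writing $M$ as the union of its hyperbolic pieces $N_1,\ldots,N_r$, its Seifert pieces $S_1,\ldots,S_s$, and the JSJ tori $T_1,\ldots,T_k$ separating them, the target is an identity of the form
\[
\tau^{(2)}(M;\rho) \cdot \prod_{\ell=1}^k \tau^{(2)}(T_\ell;\rho|_{\pi_1(T_\ell)}) = \prod_{i=1}^r \tau^{(2)}(N_i;\rho|_{\pi_1(N_i)}) \cdot \prod_{j=1}^s \tau^{(2)}(S_j;\rho|_{\pi_1(S_j)}).
\]
Since Proposition~\ref{prop:Seifert} makes each Seifert factor equal to $1$, the theorem reduces to establishing this gluing formula and showing that each torus factor also equals $1$.

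First I would set up the gluing formula. This is a standard application of the additivity of the Fuglede--Kadison determinant in short exact sequences of finite Hilbert $\cN(\pi_1(M))$-modules (\cite[Chapter~3]{Lueck}), applied to the Mayer--Vietoris short exact sequence of twisted $L^2$-chain complexes associated to the decomposition $M = (\bigcup_i N_i) \cup (\bigcup_j S_j)$ glued along $\bigsqcup_\ell T_\ell$. The chain complex of each piece $P$ is viewed as a complex of Hilbert $\cN(\pi_1(M))$-modules via induction along the injection $\pi_1(P) \hookrightarrow \pi_1(M)$, which is indeed injective because the JSJ tori are incompressible and $M$ is aspherical. Once weak $L^2$-acyclicity and determinant-class are verified for the pieces and the tori, the multiplicativity of the Fuglede--Kadison determinant along the Mayer--Vietoris exact sequence yields both the displayed identity and the well-definedness of $\tau^{(2)}(M;\rho)$ itself.

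Second I would handle the torus contributions. The fundamental group $\pi_1(T_\ell)\cong\mZ^2$ is amenable and $\chi(T^2)=0$. For any finite-dimensional representation $\varrho\colon\mZ^2\to\GL_n(\CC)$, the twisted $L^2$-chain complex $C_*^{(2)}(T^2;\varrho)$ is weakly $L^2$-acyclic of determinant class with $\tau^{(2)}(T^2;\varrho)=1$. This can be checked by explicit Fourier analysis on $\mZ^2$: the commuting matrices $\varrho(a),\varrho(b)$ can be simultaneously triangularized, and a direct computation shows that each Jordan block contributes $1$ to the alternating product of Fuglede--Kadison determinants of the twisted combinatorial Laplacians, the vanishing of $\chi(T^2)$ ensuring the required symmetric cancellation. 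Hence each torus factor in the gluing identity equals $1$, independently of $\rho|_{\pi_1(T_\ell)}$.

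The main technical obstacle is the careful setup of the gluing formula in the twisted $L^2$ framework --- in particular the compatibility of Fuglede--Kadison determinants with the induction functor between the various group von Neumann algebras $\cN(\pi_1(P)) \to \cN(\pi_1(M))$ attached to the pieces $P$, and the propagation of the determinant-class condition along the Mayer--Vietoris long exact sequence. Once this machinery is in place, iterating the formula through the JSJ decomposition of $M$ and combining with Proposition~\ref{prop:Seifert} and the torus computation yields the product formula. The graph manifold assertion is immediate since then $r=0$ and the empty product is $1$.
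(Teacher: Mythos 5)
Your proposal follows essentially the same route as the paper: a gluing formula along the incompressible JSJ tori obtained from the short exact sequence of twisted $L^2$-chain complexes together with the induction/restriction compatibility of Fuglede--Kadison determinants, the vanishing of each torus contribution (the paper's Lemma \ref{torus}, which it proves by reducing to semisimple and then $1$-dimensional representations rather than your direct Jordan-block cancellation, but this is only a different verification of the same lemma), and Proposition \ref{prop:Seifert} to kill the Seifert factors. The argument is correct and matches the paper's proof in structure and substance.
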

Note that the representations satisfying the hypotheses of Theorem \ref{seifert} form a non-empty open subset in the character variety of the manifold $M,$ as follows from our Theorem \ref{theo:Analyticity}. The hypotheses in the theorem are somewhat unnecessary but give a cleaner statement:  
\begin{enumerate}
\item we do not know whether an hyperbolic piece can be not of determinant class; on the other hand we will see that Seifert pieces always are. 

\item If the representation is reducible on a Seifert piece we can extract a formula from the proof which involves further factors (see Remark \ref{remk:abel} below).

\item In principle it is possible to give a similar formula for the twisted $L^2$-torsion of any 3--manifold by decomposing it into prime manifolds, which are either elliptical (for which the torsion is a classical Franz--Reidemeister torsion), $\mathbb S^2 \times \mathbb S^1$ (for which the computation is easy) or aspherical (for which the above result applies). 
\end{enumerate}
We also relate our invariant with its abelian cousin studied in \cite{DFL} by J\'er\^ome Dubois, Stefan Friedl and Wolfgang L\"uck, and treat the case of unitary representations in Proposition \ref{prop:Torsion}.

\subsection*{Organisation of the paper}
Section \ref{sec:Character} contains preliminary facts on character varieties of hyperbolic manifolds. In Section~\ref{sec:L2} we recall the general theory of $L^2$-invariants, define the twisted $L^2$ torsion and state various technical results. In Section \ref{sec:Seifert} we prove Theorem~\ref{seifert} and Proposition~\ref{prop:Seifert}. Finally, in Section \ref{sec:FKdet} we prove Theorem \ref{theo:Analyticity}. 

\subsection*{Acknowledgments}
We thank Thomas Schick for some useful suggestions on the topic. L.B. is partially funded by the RTG 2491 "Fourier Analysis and Spectral Theory".

%%%%%%%%%%%%%%%%%%%%%%%%%%%%%%

\section{Preliminaries on character varieties and volume function} \label{sec:Character}

% !TEX root = twistedL2.tex

In this section we collect facts about the $\SL_2(\CC)$-character variety of a hyperbolic 3--manifold. In Subsection \ref{subsec:Character} we define the character variety in general, in Subsection \ref{subsec:CharHyp} we specialize to the case of hyperbolic manifolds.
% and in Subsection \ref{subsec:Augm} we define a refinement of the character variety where two of the main ingredients of this article, the volume and lengths functions defined in Subsection \ref{subsec:VolLen}, are known to be real-analytic.

\subsection{Character varieties} \label{subsec:Character}

We refer to \cite{Sikora} for a survey of character varieties in general. If $\Gamma$ is a finitely generated group and $G$ a complex affine algebraic group the representation variety $R(\Gamma, G)$ is the set $\Hom(\Gamma, G)$, which has the structure of an affine variety (equations are given by the relations defining~$\Gamma$). The group $G$ acts by conjugation on $R(\Gamma, G)$; if $G$ is reductive then the categorical quotient $ R(\Gamma, G) \sslash G$ is a well-defined affine algebraic variety. It is called the {\em $G$-character variety} of $\Gamma$ and denoted by $X(\Gamma, G)$. For a representation $\rho \in R(\Gamma, G)$ we will denote its image in $X(\Gamma, G)$ by $[\rho]$. 

In the sequel we will only consider the case $G = \SL_2(\CC)$. The character variety $X(\Gamma, \SL_2(\CC))$ is not homeomorphic in general to the topological quotient $R(\Gamma, \SL_2(\CC)) / \SL_2(\CC)$; however there is a subset $R^*(\Gamma, \SL_2(\CC))$ of irreducible representations (a representation is irreducible if it does not admit a proper nonzero invariant subspace in $\CC^2$). This is a Zariski-open subset of $R(\Gamma, \SL_2(\CC))$. Orbits of $\SL_2(\CC)$ on $R^*(\Gamma, \SL_2(\CC))$ are closed and so its image $X^*(\Gamma, \SL_2(\CC))$ in $X(\Gamma, \SL_2(\CC))$ is homeomorphic to $R^*(\Gamma, \SL_2(\CC)) / \SL_2(\CC)$. 

%%%%%%%%%%%%%%%%%%%%%%%%%%%%%%

\subsection{Character varieties of hyperbolic 3-manifolds} \label{subsec:CharHyp}

Now we restrict to the case of interest to us of studying $X(\pi, \SL_2(\CC))$ where $\pi = \pi_1(M)$ is the fundamental group of an orientable hyperbolic manifold with $\mu$ cusps. We will use the notations
\[
R(M) = R(\pi, \SL_2(\CC)), \quad X(M) = X(\pi, \SL_2(\CC)).
\]
The complete hyperbolic structure of $M$ corresponds to a {\em holonomy representation} $\ovl\rho_0 : \pi \to \PSL_2(\CC)$ such that $M =~\ovl\rho_0(\pi) \backslash \HH^3$. It is known (see \cite{Culler}) that this representation lifts to a representation $\rho_0 \colon \pi_1(M) \to~\SL_2(\mC)$ which we will also call the holonomy representation (there is in general no unique choice for $\rho_0$---we assume that an arbitrary one has been made). It is an irreducible representation and we call its image $[\rho_0]$ in $X(M)$ the holonomy character.

We will need a way to work directly with representations rather than characters, for which the following well-known lemma (see \cite[Proposition 3.2]{Porti_thesis}) will be useful. 
%\footnote{A local version which would be suficient for us also follows immediately from the analytic implicit function theorem}.\commentleo{On garde la footnote?}

\begin{lemma} \label{section}
  There exists a section of the projection $R(M) \to  X(M)$ which is real-analytic on a neighbourhood of the holonomy character $[\rho_0]$.
\end{lemma}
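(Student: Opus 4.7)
The plan is to construct a local slice for the $\SL_2(\CC)$-action on $R(M)$ near the irreducible representation $\rho_0$, by imposing explicit normalization conditions that pick out a single representative in each nearby conjugacy class. Since $\rho_0$ is irreducible, its $\SL_2(\CC)$-stabilizer reduces to the centre $\{\pm I\}$, so the orbit $\SL_2(\CC) \cdot \rho_0$ has complex dimension $3$; moreover irreducibility persists in a neighbourhood $V$ of $\rho_0$ in $R(M)$. It therefore suffices to exhibit three independent analytic normalizations that pin down one point in each orbit passing through $V$.

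Concretely, I first choose elements $\gamma_1, \gamma_2 \in \pi$ such that $\tr \rho_0(\gamma_1) \ne \pm 2$ (so $\rho_0(\gamma_1)$ has two distinct eigenvalues $\lambda_1 \ne \lambda_2$) and such that, in a basis of eigenvectors of $\rho_0(\gamma_1)$, the $(1,2)$-entry of $\rho_0(\gamma_2)$ is non-zero; both properties can be met because $\rho_0$ is irreducible (otherwise the candidate eigenline would be $\rho_0$-invariant). For each $\rho$ in a small enough neighbourhood of $\rho_0$ in $R(M)$, the matrix $\rho(\gamma_1)$ still has distinct eigenvalues depending holomorphically on $\rho$, so one can choose $g_\rho \in \SL_2(\CC)$, holomorphic in $\rho$, with $g_\rho \rho(\gamma_1) g_\rho^{-1} = \op{diag}(\tilde\lambda_1(\rho), \tilde\lambda_2(\rho))$ and $\tilde\lambda_i(\rho_0) = \lambda_i$. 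The remaining freedom in $g_\rho$ is a diagonal factor $\op{diag}(\mu, \mu^{-1})$, which rescales $(g_\rho \rho(\gamma_2) g_\rho^{-1})_{1,2}$ by $\mu^2$; since this entry is non-zero near $\rho_0$, one can fix it to $1$ by a suitable choice of $\mu$, unique up to sign. The formula $\Psi(\rho) := g_\rho \rho g_\rho^{-1}$ is thus well-defined and (complex-)analytic on this neighbourhood of $\rho_0$.

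Finally, if $\rho' = h \rho h^{-1}$ for some $h \in \SL_2(\CC)$, then both $g_\rho$ and $g_{\rho'} h$ satisfy the above normalization conditions for $\rho$, hence differ by an element of the stabilizer $\{\pm I\}$, giving $\Psi(\rho') = \Psi(\rho)$. Consequently $\Psi$ descends to a real-analytic map $s \colon U \to R(M)$ defined on a neighbourhood $U$ of $[\rho_0]$ in $X(M)$, which is by construction a section of the projection $R(M) \to X(M)$. The main point of the argument is the analytic diagonalization of $\rho(\gamma_1)$: it relies on the fact that, at $\rho_0(\gamma_1)$, the eigenvalues are distinct and depend holomorphically on the matrix entries; everything else reduces to elementary linear algebra and the choice of a holomorphic square root.
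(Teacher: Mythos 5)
Your construction of the normalized representative is correct as far as it goes: near the irreducible representation $\rho_0$ one can indeed pick $\gamma_1$ with $\tr\rho_0(\gamma_1)\neq\pm2$ and $\gamma_2$ with non-vanishing upper-right entry in the eigenbasis of $\rho_0(\gamma_1)$, diagonalize $\rho(\gamma_1)$ analytically in $\rho$, and use the residual diagonal freedom to set the $(1,2)$-entry of $\rho(\gamma_2)$ equal to $1$; the resulting map $\Psi\colon V\to R(M)$ is analytic on a neighbourhood $V$ of $\rho_0$ in $R(M)$ and constant on the intersections of conjugacy classes with $V$. (For comparison: the paper does not prove this lemma but cites Proposition 3.2 of Porti's thesis, where the section is obtained by a similar normalization, with the entries of the normalized representation written explicitly in terms of trace functions.)

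The gap is in the final sentence, ``Consequently $\Psi$ descends to a real-analytic map $s\colon U\to R(M)$.'' What descends for free is only a set-theoretic section on $U=p(V)$, where $p\colon R(M)\to X(M)$ is the projection (and even the fact that $U$ is a neighbourhood of $[\rho_0]$ uses openness of $p$ on the irreducible locus, which you should at least record). The real-analyticity of $s$ as a function on $X(M)$ is exactly the content of the lemma, and it does not follow formally from the analyticity of $\Psi=s\circ p$ on $R(M)$: $X(M)$ is a categorical (GIT) quotient, a priori singular at $[\rho_0]$, and ``real-analytic near $[\rho_0]$'' means that the matrix entries of $s([\rho])(\gamma)$ are analytic in the coordinates of $X(M)$ --- analytic in the \emph{character}, not merely in a representative $\rho$. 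Your argument only gives the latter. To bridge this you need an additional input, e.g.: (a) express the entries of the normalized representation as rational (hence analytic where the denominators do not vanish, which is guaranteed by irreducibility) functions of the trace functions $\tr\rho(\gamma)$, using the classical trace identities for the irreducible pair $(\rho(\gamma_1),\rho(\gamma_2))$ --- this is the route of Porti's proof and makes analyticity on $X(M)$ manifest, since trace functions generate its coordinate ring; or (b) invoke Luna's slice theorem: the stabilizer of $\rho_0$ is the centre $\{\pm I\}$, which acts trivially, so your normalized slice $N=\Psi(V)$ maps \'etale, hence locally biholomorphically, onto a neighbourhood of $[\rho_0]$ in $X(M)$, and $s$ is the inverse of that map; or (c) use smoothness of $R(M)$ at $\rho_0$ and of $X(M)$ at $[\rho_0]$ together with submersivity of $p$ there, which is itself a nontrivial fact about hyperbolic holonomies. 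Without one of these, the passage from ``analytic in the representation'' to ``analytic in the character'' --- which is precisely how the lemma is used later in the paper --- is not established.
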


\section{The $L^2$-torsion} \label{sec:L2}

% !TEX root = twistedL2.tex
We introduce the twisted $L^2$-torsion $\todeux$ for unimodular representations of 3-manifold groups. Twisted $L^2$-torsions are considered in a general context in \cite{Lueck_twisted} and we use this and the book \cite{Lueck} as a reference; we recall enough in Subsection \ref{subsec:L2} to be self-contained minus proofs. In Subsection \ref{subsec:twisting} we use this to define $\todeux$. In Subsection \ref{subsec:comp} we give some useful properties and some simple examples. Finally, in Subsection \ref{subsec:Reg} we prove a regularity lemma for the determinant which we will use in Section \ref{sec:FKdet}. 

%%%%%%%%%%%%%%%%%%%%%%%%%%%%%%

\subsection{$L^2$-invariants} \label{subsec:L2}

Let $\pi$ be a discrete group and $\CC\pi$ its group ring. The reduced von Neumann algebra $\neu\pi$ is the commutant of the left action
%\commentleo{Pour la suite, je pense qu'on a besoin d'une action \`a gauche, pour coller avec les repr\'esentations qui agissent \`a gauche.}
of $\CC\pi$ on $\ell^2(\pi)$ in the algebra of bounded operators on the Hilbert space $\ell^2(\pi)$ (see \cite[Definition 1.1]{Lueck}). A finitely generated Hilbert $\neu\pi$-module is a
%\commentleo{subspace of a... ? Je crois qu'il y a une subtilit\'e l\`a, pas tr\`es importante.} 
quotient of some $\ell^2(\pi)^n$ by a closed $\neu\pi$-invariant subspace \cite[Definition 1.5]{Lueck}. There is a functor $\Lambda$ from the category of based free $\CC\pi$-modules to the category of Hilbert $\neu\pi$-modules \cite[p.~727]{Lueck_twisted} ($\Lambda M$ can be defined as the $L^2$-completion of $M$ with respect to the basis $g\cdot b$ where $b$ is an element of the $\CC\pi$-basis of $M$ and $g \in \pi$). In the sequel we will often make no distinction between $M$ and $\Lambda M$ or $f$ and $\Lambda f$. 

For $V$ a Hilbert $\neu\pi$-module we denote by $\mathcal B_{\neu\pi}(V)$ (resp. $\GL_{\neu\pi}(V)$) the set of bounded (resp. bounded with a bounded inverse) $\neu\pi$-equivariant operators on $V$, and by $\tr_{\neu\pi}$ the usual trace on $\mathcal B_{\neu\pi}(V)$ if $V$ is of finite type (see \cite[Definitions 1.2, 1.8]{Lueck}). For the definition of the $L^2$-torsion we need the definition of the Fuglede--Kadison determinant $\det_{\mathcal N\pi}(A)$ of a morphism $A$ between two Hilbert $\mathcal N\pi$-modules. If $A$ is a positive self-adjoint bounded operator in $\mathcal B_{\neu\pi}(V)$ with trivial kernel then the operator $\log(A)$ is well-defined---but not bounded in general. If $\log(A)$ is in $\mathcal B_{\neu\pi}(V)$ its Fuglede--Kadison determinant of $A$ can be computed by the formula: 
\begin{equation} \label{def_FKdet}
  \detn_{\neu\pi}(A) = \exp(\tr_{\neu\pi}\log(A)).
\end{equation}
The operator $\log(A)$ is in $\mathcal B_{\neu\pi}(V)$ if and only if $A \in \GL_{\neu\pi}(V)$, which is the main case we will consider in the sequel.

The Fuglede--Kadison determinant is defined in general, and the equality above holds formally (see \cite[Definition 3.11]{Lueck}). When the trace is $-\infty$ the determinant is zero. If $\detn_{\neu\pi}(A) > 0$ then the operator $A$ is said to be {\em of determinant class}. Being of determinant class depends only on the behaviour at $0$ of the spectral density function of $A$; the simplest case is when its Novikov--Shubin invariants (see \cite[Definitions 2.8, 2.16]{Lueck}) are positive.

\begin{remark} \label{detclass}
It is a difficult question to know whether a morphism is of determinant class, except when $\pi$ is virtually abelian (if $\pi$ is such then all morphisms between finitely generated $\CC\pi$-modules are of determinant class, as follows from \cite[Example 3.13]{Lueck} and \cite[Lemma 1.8]{Everest_Ward}). 

For many groups $\pi$ (including all residually finite groups, so in particular al 3--manifold groups) is is known that when $f$ is given by a matrix over the integral group ring $\ZZ\pi$ it is of determinant class (see \cite[Chapter 13]{Lueck}). So if $\rho$ is defined over $\QQ$ or even $\overline\QQ$ we have $\todeux \not= 0$. However when $\pi$ is nonabelian and the representation has transcendental coefficients it is essentially completely open to give general criteria. 
\end{remark}

If $(C_*, d_*)$ is a complex of Hilbert $\neu\pi$-modules which is $L^2$-acyclic (its reduced $L^2$-homology \cite[Definition 1.17]{Lueck} vanishes, that is $\mathrm{Im}(d_p)$ is dense in $\ker(d_{p-1})$) then the combinatorial Laplace operators
\begin{equation} \label{defn_comb_lapl}
  \Delta_p = d_pd_{p+1}^* + d_{p+1}d_p^*
\end{equation}
have zero kernel. They are always positive, and the complex $(C_*, d_*)$ is said to be {\em of determinant class} if all $\Delta_p$ are of determinant class. Then the $L^2$-torsion $\tau^{(2)}(C_*) \in ]0, +\infty[$ is defined by
%\commentleo{S\^ur que tu pr\'ef\`eres d\'efinir la torsion avec les Laplaciens plut\^ot qu'avec les diff\'erentielles?}
\begin{equation} \label{defn_tors_alg}
  \tau^{(2)}(C_*)^2 = \prod_{p=0}^n \detn_{\neu\pi}(\Delta_p)^{(-1)^p p}. 
\end{equation}
We extend the definition of $L^2$-torsion to all complexes by taking the convention that $\tau^{(2)}(C_*) = 0$ if $C_*$ is not of determinant class. 

%%%%%%%%%%%%%%%%%%%%

\subsection{Twisted $L^2$-torsion for unimodular representations} \label{subsec:twisting}

Let $M$ be a finite CW-complex, $\pi$ its fundamental group and $\rho \colon \pi \to \mathrm{GL}(E)$
%\commentleo{$\CC^2$ ou $V$?} 
a finite-dimensional unimodular representation (we will be interested here in $M$ being a 3--manifold and $E = \CC^2$). There is a twisted chain complex of $\CC(\pi)$-modules 
\[
C_*(\wdt M, \rho) = C_*(\wdt M) \otimes_{\CC} E
\]
where any element $\gamma$ of the group $\pi$ acts by $\gamma \otimes \rho(\gamma)$ on $\CC(\pi) \otimes_{\CC} E$ by the right-regular representation %\commentleo{Finalement j'ai l'impression que c'est mieux comme ca?} 
on the first factor times $\rho$ acting on the left on the second one (this is denoted by $\eta_E C_*(\wdt M)$ in~\cite{Lueck_twisted}).

If we have chosen a $\CC\pi$-basis for $C_*(\wdt M)$ (that is, an orientation and a lift to the universal cover for each cell) and a basis for $E$ then we get a complex of free based $\CC\pi$-modules. If its completion $\Lambda C_*(\wdt M, \rho)$ (which we will rather denote by $C_*^{(2)}(M, \neu\pi\otimes \rho)$ in the rest of this paper)
%\commentleo{Ou  $C_*^{(2)}(M, \rho)$? Ca permet d'avoir une notation unifiee avec les complexes des revetements intermediaires.}\commentjean{fait}) 
is $L^2$-acyclic then we define the twisted $L^2$-torsion of $(M, \rho)$ by: 
\begin{equation} \label{defn_todeux}
  \todeux = \tau^{(2)}(C_*^{(2)}(M, \neu\pi \otimes \rho)). 
\end{equation}
Since the representation $\rho$ is unimodular, it follows immediately from \cite[Theorem 6.7(1), Lemma 3.2(1)]{Lueck_twisted} that \eqref{defn_todeux} does not depend on on the choices of bases for $C_*(\wdt M)$ and $E$.

\medskip

We are interested in the twisted $L^2$-torsion as defined above for 3--manifolds; however at some points we will need the $L^2$-torsion for more general covers. The twisted $L^2$-torsion of a cover $\widehat M \to M$ associated to a surjective morphism $\pi \to \Lambda$ is defined exactly as above, using the chain complexes $C_*(\widehat M)$ in place of $C_*(\wdt M)$ and replacing the group ring and von Neumann algebra $\CC\pi, \neu\pi$ by $\CC  \Lambda, \neu \Lambda$ respectively. In this case we will denote the complex by $C_*^{(2)}(M, \mathcal N \Lambda \otimes \rho)$, and the $L^2$-torsion by $\tau^{(2)}(M, \mathcal N \Lambda\otimes \rho)$.
%\commentjean{j'ai ajouté ça}

%%%%%%%%%%%%%%%%%%%%%%%%%%%%%%

\subsection{Properties and first examples} \label{subsec:comp}

The following basic fact follows immediately from \cite[Lemma 3.2 (1)]{Lueck_twisted}. 

\begin{lemma} \label{conj_indep}
  Let $\rho, \rho' \colon \pi_1(M) \to \SL_2(\CC)$ be two conjugate representations. Then $\todeux = \tau^{(2)}(M, \rho')$.
\end{lemma}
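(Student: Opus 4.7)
The plan is to construct an explicit chain isomorphism between the twisted complexes corresponding to $\rho$ and $\rho'$ that is given, on each chain group, by a $\CC\pi$-linear base change of determinant one, and then invoke the basis-independence of the $L^2$-torsion for unimodular base changes.

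Pick $g \in \SL_2(\CC)$ with $\rho'(\gamma) = g \rho(\gamma) g^{-1}$ for every $\gamma \in \pi$. On each chain group $C_p(\wdt M) \otimes_\CC E$ define
\[
\Phi_p(c \otimes v) = c \otimes gv.
\]
A direct check shows that $\Phi_p$ intertwines the two twisted $\pi$-actions: for $\gamma \in \pi$,
\[
\Phi_p\bigl(\gamma\cdot_\rho (c\otimes v)\bigr) = \gamma c \otimes g\rho(\gamma)v = \gamma c \otimes \rho'(\gamma) g v = \gamma \cdot_{\rho'} \Phi_p(c\otimes v),
\]
and it obviously commutes with the differentials since these act only on the $C_*(\wdt M)$-factor. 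Hence $\Phi_*$ is an isomorphism of based free $\CC\pi$-chain complexes from $C_*(\wdt M, \rho)$ to $C_*(\wdt M, \rho')$.

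With respect to the chosen $\CC\pi$-bases, $\Phi_p$ is represented by the block diagonal matrix $\mathrm{id}_{C_p(\wdt M/\pi)} \otimes g$, whose ordinary determinant equals $\det(g)^{c_p} = 1$ because $g \in \SL_2(\CC)$. Applying the functor $\Lambda$, we obtain an isomorphism $\Lambda \Phi_*$ of $L^2$-chain complexes whose matrix entries all lie in $\CC\pi \subset \neu\pi$ and which is diagonal with unimodular blocks. By \cite[Lemma 3.2(1)]{Lueck_twisted}, the $L^2$-torsion is invariant under such isomorphisms, and we conclude $\todeux = \tau^{(2)}(M, \rho')$.

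There is essentially no obstacle here; the only thing to be careful about is the verification that $\Phi_*$ is equivariant for the two distinct twisted actions (which is what forces $g^{-1}$ to cancel in the computation above) and that unimodularity of $g$ is exactly what is needed to invoke Lück's lemma — this is the same reason that $\todeux$ only depends on $[\rho]$ up to conjugation within $\SL_2(\CC)$, and not within $\GL_2(\CC)$.
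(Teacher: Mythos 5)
Your proof is correct and is essentially the paper's argument made explicit: the paper proves this lemma simply by citing L\"uck's Lemma 3.2(1), and your chain isomorphism $\Phi_p(c\otimes v)=c\otimes gv$ is exactly the unimodular change of basis of $E$ to which that lemma (already invoked for basis-independence of $\todeux$) applies. Only your closing aside is slightly misleading: conjugating by $g\in\GL_2(\CC)$ coincides, up to a central scalar acting trivially, with conjugating by an element of $\SL_2(\CC)$, so no extra generality is actually at stake there.
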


The next well-known lemma (see for example \cite[Lemma 3.1]{DFL}) will be useful to compute the $L^2$-torsion of Seifert manifolds.

\begin{lemma} \label{decomposition}
  Let
  \[
  C_* = 0 \to \CC\pi^k \xrightarrow{A} \CC \pi^{k+l} \xrightarrow{B} \CC \pi^l \to 0. 
  \]
  Let $A' \in M_{k, k}(\CC\pi)$ and $B' \in M_{l, l}(\CC\pi)$ be the square matrices obtained from the $k$ first lines of $A$ and $l$ last columns of $B$ respectively.
  %\commentleo{On peut enlever n'importe quelles $k$ lignes, et les colonnes correspondantes?}. 
  If $A', B'$ are acyclic and of determinant class then so is $C_*$ and moreover
  \[
  \tau^{(2)}(C_*) = \detn_{\neu\pi}(A')^{-1}\detn_{\neu\pi}(B').
  \]
\end{lemma}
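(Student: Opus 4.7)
The strategy is to realize $C_*$ as an extension of two two-term complexes, one built from $B'$ and the other from $A'$, and then invoke the multiplicativity of the $L^2$-torsion for short exact sequences.

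Writing $A = \binom{A'}{A''}$ and $B = (B'', B')$ in block form, I observe that the submodule $\{0\}\oplus\CC\pi^l \subset \CC\pi^{k+l}$ in degree $1$, together with $\CC\pi^l$ in degree $0$, forms a subcomplex $Y_*$ of $C_*$: indeed, the differential of $C_*$ sends $(0, v_2)$ to $B'' \cdot 0 + B' v_2 = B' v_2$, so the restriction of $B$ to $Y_1$ lands in $Y_0$ and identifies with $B'$. The quotient complex $C_*/Y_*$ has $\CC\pi^k$ in degrees $2$ and $1$ (in degree $1$ as $\CC\pi^{k+l}/(\{0\}\oplus\CC\pi^l)$) and vanishes in degree $0$; its induced differential is the projection of $A$ onto the first $k$ coordinates, i.e., $A'$. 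This produces a short exact sequence of chain complexes of $\CC\pi$-modules
\[
0 \to Y_* \to C_* \to D^{(1)}_* \to 0,
\]
where $Y_* = (\CC\pi^l \xrightarrow{B'} \CC\pi^l)$ is placed in degrees $1,0$ and $D^{(1)}_* = (\CC\pi^k \xrightarrow{A'} \CC\pi^k)$ in degrees $2,1$. Since $\{0\}\oplus\ell^2(\pi)^l$ is an orthogonal direct summand of $\ell^2(\pi)^{k+l}$, this sequence remains short exact after $L^2$-completion.

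By hypothesis both $Y_*$ and $D^{(1)}_*$ are $L^2$-acyclic and of determinant class. Applying the sum formula for the $L^2$-torsion in short exact sequences (see \cite[Theorem 3.35]{Lueck}) and using that the long exact sequence in reduced $L^2$-homology associated to the SES above is trivial (its outer terms vanish), I obtain that $C_*$ is itself $L^2$-acyclic and of determinant class, with
\[
\tau^{(2)}(C_*) = \tau^{(2)}(Y_*) \cdot \tau^{(2)}(D^{(1)}_*).
\]
A direct computation with the combinatorial Laplacians of each two-term complex and the definition \eqref{defn_tors_alg} then yields $\tau^{(2)}(Y_*) = \det_{\neu\pi}(B')$ and $\tau^{(2)}(D^{(1)}_*) = \det_{\neu\pi}(A')^{-1}$, the sign difference stemming from the parity of the degrees occupied by each two-term complex. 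Multiplying gives the claimed formula.

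The main step requiring care is the correct invocation of the sum formula (in particular that no correction term appears in our setting, thanks to the acyclicity of the outer complexes); everything else is bookkeeping with block matrices. An advantage of this approach is that it sidesteps any Gaussian-elimination-style change of basis that would require invertibility of $A'$ in $\CC\pi$, which is unavailable in general since $A'$ is only weakly acyclic on the $L^2$-completion rather than bounded invertible.
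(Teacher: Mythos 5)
Your proof is correct and follows essentially the same route as the paper's: the same splitting of $C_*$ into the subcomplex carried by the last $l$ coordinates with differential $B'$ (degrees $1,0$) and the quotient complex with differential $A'$ (degrees $2,1$), the same appeal to additivity of the $L^2$-torsion for short exact sequences of Hilbert complexes (L\"uck's Theorem 3.35(1)) with the correction terms vanishing because the outer complexes are weakly acyclic and the degreewise inclusions/projections are isometric onto orthogonal summands, and the same evaluation of the torsion of a one-differential complex. Nothing to add.
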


\begin{proof}
  Consider the diagram
  \[
  \begin{CD}E_*\\@VVV\\C_*\\@VVV\\D_*\end{CD} \qquad
  \begin{CD}
    0 @>>> 0        @>>>     \CC\pi^l @>{B'}>> \CC\pi^l @>>> 0 \\
    @.      @VVV               @VVV              @|         @. \\
    0 @>>> \CC\pi^k @>A>> \CC\pi^{k+l} @>B>>    \CC\pi^l @>>> 0 \\
    @.      @|                 @VVV              @VVV       @. \\
    0 @>>> \CC\pi^k @>{A'}>> \CC\pi^k @>>>      0       @>>> 0 \\
  \end{CD}
  \]
  where vertical arrows in the middle are given by natural inclusions and projections. This gives a short exact sequence of complexes $0 \to E_* \to C_* \to D_* \to 0$ and it follows from additivity of $L^2$-torsion \cite[Theorem 3.35(1)]{Lueck} (the terms on the right in this equation vanish because of acyclicity and determinants of inclusions and projections are 1) that $C_*$ is acyclic, of determinant class and 
  \[
  \tau^{(2)}(C_*) = \tau^{(2)}(D_*) \cdot \tau^{(2)}(E_*).
  \]
  Since the torsion of a complex with a single nonzero differential $d$ is equal to $\det_{\neu\pi}(d)^{(-1)^p}$ where $p$ is the degree of $d$, the result follows. 
\end{proof}

In some very special cases the computation of the $L^2$-torsion is immediate. 

\begin{proposition} \label{prop:Torsion}
\begin{enumerate}
\item \label{unitary} Assume that $M$ is hyperbolic. If the representation $\rho \colon \pi_1(M) \to \SU(2) \subset \SL_2(\CC)$ is unitary then
\[
\todeux = \tau^{(2)}(M)^2 = \exp(-\Vol(M)/3\pi)
\]
%\commentleo{Je trouve un peu abrupt d'envoyer deja l'enonce avec JSj, alors qu'on a pas encore calcule la torsion des varietes Seifert ni des tores de recollement. Plutot mettre l'enonce pour M hyperbolique, et ajouter une remarque a la fin de la section JSJ?} \commentjean{fait}

\item \label{abelian} Assume that $H_1(M)= \ZZ$, and let $\phi \colon \pi_1(M) \to \ZZ$ be a choice of an abelianization map. If $\rho\colon \pi_1(M) \to \SL_2(\CC)$ is reducible, conjugated to 
$$\rho(\gamma) = \bm \lambda^{\phi(\gamma)}& \ast \\ 0 & \lambda^{-\phi(\gamma)}\ema$$
for some $\lambda \in \CC^*$ then 
$$\todeux = \tau^{(2)}(M,\phi)(|\lambda|) \tau^{(2)}(M, \phi)(|\lambda|^{-1})$$
where the abelian-twisted $L^2$-torsion $\tau^{(2)}(M,\phi)$ is defined in \cite{DFL}.
\end{enumerate}
\end{proposition}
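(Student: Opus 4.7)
The proof relies on two instances of a single principle: unitary twists do not change the $L^2$-torsion in the expected multiplicative sense.

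For part \eqref{unitary}, the required statement is that for a unitary representation $\varrho\colon \pi \to U(F)$ one has
\[
\tau^{(2)}(M;\varrho) = \tau^{(2)}(M)^{\dim F}. \qquad (\star)
\]
This is most transparent analytically: on the universal cover $\wdt M = \HH^3$, the flat unitary $F$-bundle is globally trivialized by a parallel orthonormal frame, so $\Delta_k^\varrho = \Delta_k \otimes \id_F$, the $\pi$-equivariant heat trace multiplies by $\dim F$, and zeta-regularization yields $(\star)$ for the analytic torsion; the L\"uck--Schick comparison \cite{LS} (in the unitary setting, lighter than the Wa\ss erman extension \cite{Wasserman} needed for the holonomy) then gives $(\star)$ for the combinatorial torsion. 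Applying $(\star)$ with $F = \CC^2$ together with $\tau^{(2)}(M) = e^{-\Vol(M)/6\pi}$ gives part \eqref{unitary}.

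For part \eqref{abelian}, the upper-triangular form of $\rho$ yields a short exact sequence of $\CC\pi$-modules
\[
0 \to \CC_{\lambda^\phi} \to \CC^2 \to \CC_{\lambda^{-\phi}} \to 0,
\]
where $\CC_\chi$ denotes $\CC$ with action $\gamma\cdot v = \chi(\gamma)v$. Tensoring with $C_*(\wdt M)$ produces a short exact sequence of Hilbert $\neu\pi$-chain complexes, each of determinant class by Remark~\ref{detclass} (abelian twist). Additivity \cite[Theorem~3.35(1)]{Lueck} yields $\todeux = \tau^{(2)}(M;\lambda^\phi)\cdot \tau^{(2)}(M;\lambda^{-\phi})$. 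For the $1$-dimensional character $\lambda^\phi = |\lambda|^\phi\cdot u$ with $u = (\lambda/|\lambda|)^\phi$ unitary, the twist by $u$ is implemented by the $*$-automorphism $\gamma \mapsto u(\gamma^{-1})\gamma$ of $\neu\pi$ (well-defined as $\overline{u(\gamma^{-1})} = u(\gamma)$), which preserves the trace and hence the Fuglede--Kadison determinants of the twisted Laplacians. Consequently $\tau^{(2)}(M;\lambda^\phi) = \tau^{(2)}(M;|\lambda|^\phi) = \tau^{(2)}(M,\phi)(|\lambda|)$ by the definition of \cite{DFL}; the same applies to $\lambda^{-\phi}$, and multiplying gives the stated formula.

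The one subtle point is establishing $(\star)$ combinatorially: while the analytic argument is clean, carrying it over requires either a unitary version of the L\"uck--Schick comparison (a mild adaptation of the existing argument), or an algebraic proof via the trace-scaling $*$-ring embedding $\neu\pi \hookrightarrow M_{\dim F}(\neu\pi)$ given by $\gamma \mapsto \varrho(\gamma)\gamma$ (which is a $*$-homomorphism precisely because $\varrho(\gamma^{-1}) = \varrho(\gamma)^*$). Once $(\star)$ is granted, part \eqref{abelian} follows with no additional difficulty.
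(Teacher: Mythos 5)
Your overall architecture matches the paper's: part \eqref{unitary} is exactly the content of the result the paper cites as \cite[Theorem 4.1(4)]{Lueck_twisted}, and for part \eqref{abelian} you use the same short exact sequence of coefficient modules $0 \to \CC_{\lambda^\phi} \to \CC^2 \to \CC_{\lambda^{-\phi}} \to 0$ plus additivity (the paper invokes \cite[Lemma 3.3]{Lueck_twisted} rather than \cite[Theorem 3.35(1)]{Lueck}, which is the same principle). Your identification $\tau^{(2)}(M;\lambda^{\pm\phi}) = \tau^{(2)}(M,\phi)(|\lambda|^{\pm 1})$ via the trace-preserving $*$-automorphism absorbing the unitary part of the character is a correct and welcome step that the paper leaves implicit. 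For \eqref{unitary}, however, your primary (analytic) route overreaches: a unitary-twisted L\"uck--Schick comparison for cusped finite-volume manifolds is not ``a mild adaptation'' available off the shelf --- handling the cusp contributions is precisely the delicate part of \cite{LS} and of \cite{Wasserman}. The analytic detour is also unnecessary: the relation $(\star)$ is purely combinatorial, because for unitary $\varrho$ the untwisting map $\gamma e \otimes v \mapsto \gamma e \otimes \varrho(\gamma)v$ is a unitary isomorphism of Hilbert $\neu\pi$-chain complexes between the twisted complex and $\dim F$ copies of the untwisted one, so all Fuglede--Kadison determinants are raised to the power $\dim F$; this is your ``algebraic alternative'' and it is the argument you should promote to the main one (it is what the cited theorem of L\"uck encapsulates), after which only the untwisted L\"uck--Schick formula $\tau^{(2)}(M)=e^{-\Vol(M)/6\pi}$ is needed.

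There is one genuine error of justification in part \eqref{abelian}: you claim the $\lambda^{\pm\phi}$-twisted complexes are of determinant class ``by Remark \ref{detclass} (abelian twist)''. That remark applies when the \emph{group} $\pi$ is virtually abelian (or when the coefficients are algebraic); here $\pi$ is a nonabelian $3$--manifold group and the twisted differentials are matrices over $\CC\pi$ with possibly transcendental coefficients, so the remark gives nothing --- the fact that the twisting character factors through $H_1(M)=\ZZ$ does not reduce the group von Neumann algebra to the abelian case. Determinant class (and weak acyclicity) of these one-dimensionally twisted complexes is exactly the nontrivial input of Liu's theorem \cite{Liu_L2alex} (or must be packaged into the hypotheses of the additivity statement, as in \cite[Lemma 3.3]{Lueck_twisted}, with the paper's convention that the torsion is $0$ otherwise). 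Replace the appeal to Remark \ref{detclass} accordingly; with that repair, and with the algebraic proof of $(\star)$ in part \eqref{unitary}, your argument is correct and coincides in substance with the paper's.
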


\begin{proof}
Point \eqref{unitary} is a consequence of \cite[Theorem 4.1 (4)]{Lueck_twisted}

For \eqref{abelian}, observe that using Lemma \ref{conj_indep} one can assume that $\rho$ has the triangular form above. It induces an exact sequence of $\pi$-modules
\[
0 \to \ell^{2}(\pi) \to \ell^{(2)}(\pi)^2 \to \ell^{(2)}(\pi) \to 0
\]
where the $\pi$-actions are given by the representations $\lambda^{\phi}, \rho, \lambda^{-\phi}$ respectively. Now we can apply \cite[Lemma 3.3]{Lueck_twisted} and conclude since we have the equality $\tau^{(2)}(M,-\phi)(|\lambda|) = \tau^{(2)}(M, \phi)(|\lambda|^{-1})$.
\end{proof}

%%%%%%%%%%%%%%%%%%%%%%%%%%%%%%

\subsection{Regularity under spectral gap} \label{subsec:Reg}

Let $V$ be a Hilbert space, $\mathcal B(V)$ the space of bounded linear operators on $V$ and $\mathcal S(V) \subset \mathcal B(V)$ the subspace of self-adjoint operators. By the spectral theorem any self-adjoint operator $T$ in $\mathcal S(V)$ has a spectrum $\sigma(T)$ which is a closed subset of $\RR$. It follows from the spectral theorem and Cauchy--Schwarz inequality that we have
\begin{equation} \label{rayleigh}
  \inf \sigma(T) = \inf_{\|v\| = 1} \frac{\|Tv\|}{\|v\|} = \inf_{v \in V \setminus \{0\}} \frac{\|Tv\|}{\|v\|} = \inf_{v \in V \setminus \{0\}} \frac{\langle Tv, v \rangle}{\|v\|^2}
\end{equation}
We then have the following well-known lemma. 

\begin{lemma} \label{spec_continuity}
  The function $\mathcal S(V) \to \RR$ defined by $T \mapsto \inf \sigma(T)$ is continuous. %(in fact 1-Lipschitz for the operator norm on $\mathcal S(V)$).
\end{lemma}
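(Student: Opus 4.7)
The plan is to use the variational characterization \eqref{rayleigh} to exhibit $T \mapsto \inf\sigma(T)$ as an infimum of a family of uniformly Lipschitz functionals, and conclude that it is itself $1$-Lipschitz with respect to the operator norm on $\mathcal S(V)$.

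First I would fix a unit vector $v \in V$ with $\|v\|=1$ and observe that for any $T, T' \in \mathcal S(V)$ one has
\[
|\langle Tv,v\rangle - \langle T'v,v\rangle| = |\langle (T-T')v, v\rangle| \le \|T - T'\|,
\]
so the affine functional $\varphi_v \colon T \mapsto \langle Tv, v\rangle$ is $1$-Lipschitz on $\mathcal S(V)$. Taking the pointwise infimum of $1$-Lipschitz functions preserves the Lipschitz constant (provided it is finite), so by \eqref{rayleigh} the function
\[
T \longmapsto \inf \sigma(T) = \inf_{\|v\|=1} \varphi_v(T)
\]
satisfies $|\inf\sigma(T) - \inf\sigma(T')| \le \|T - T'\|$, which gives the claimed continuity (in fact $1$-Lipschitz continuity).

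There is essentially no obstacle here; the only point that deserves a line of care is that $\inf\sigma(T)$ is finite (hence the infimum of affine functionals is not identically $-\infty$), which follows from $\inf\sigma(T) \ge -\|T\|$, itself an immediate consequence of the same Rayleigh formula together with $|\langle Tv,v\rangle| \le \|T\|$ for unit $v$.
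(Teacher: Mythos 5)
Your proof is correct, and it takes a slightly different (and in fact more robust) route than the paper. The paper argues in two steps, proving lower and upper semicontinuity separately by an $\varepsilon$-argument based on the first expression in \eqref{rayleigh}, i.e.\ the characterization via $\inf_{\|v\|=1}\|Tv\|$; you instead use only the quadratic-form expression $\inf_{\|v\|=1}\langle Tv,v\rangle$ and the elementary principle that a pointwise infimum of $1$-Lipschitz functionals is $1$-Lipschitz once it is finite. This buys you two things: a quantitative conclusion, namely $|\inf\sigma(T)-\inf\sigma(S)|\le \|T-S\|$ rather than bare continuity, and independence from the equality $\inf\sigma(T)=\inf_{\|v\|=1}\|Tv\|$, which is only valid when $\sigma(T)\subset[0,+\infty)$ (for a general self-adjoint $T$ the right-hand side computes $\inf\{|\lambda|:\lambda\in\sigma(T)\}$); the paper's argument thus really proves the lemma for positive operators, which suffices for its application to Laplacians, whereas your argument proves the statement on all of $\mathcal S(V)$ as written. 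Your closing remark on finiteness, $\inf\sigma(T)\ge-\|T\|$, also quietly disposes of the paper's awkward case distinction ``$\inf\sigma(T)=-\infty$'', which cannot occur for bounded operators anyway.
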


\begin{proof}
  Let $T \in \mathcal S(V)$ such that $\inf\sigma(T) = \lambda > -\infty$. Let $\varepsilon > 0$. Then for any $v \in V$ with $\|v\|=1$ we have $\|Tv\| \ge \lambda$. If $\|| S - T \|| < \varepsilon$ then for all such $v$ we have $\| Sv \| \ge \|Tv\| - \|Sv - Tv\| \ge \lambda - \varepsilon$. Together with the first equality in \eqref{rayleigh} this proves that $\sigma$ is lower-continuous. On the other hand there exists a $w \in V$, $\|w\| = 1$ such that $\| Tw \| \le (\lambda + \varepsilon) \|$. For $S$ as above we have $\|Sw\| \le \|Tw\| + \|Sw - Tw\| \le \lambda + 2\varepsilon$ so $\sigma(S) \le \lambda + 2\varepsilon$. With \eqref{rayleigh} this proves that $\sigma$ is upper-continous.

  The case where $\sigma(T) = -\infty$ (which is not interesting for us) is similar and left to the reader. 
\end{proof}

Let $\mathcal S^{>0}(V)$ be the set of self-adjoint operators $A \in \mathcal S(V)$ such that $\inf\sigma(A) > 0$, and let $\mathcal S_{\neu\pi}^{>0}(V) = \mathcal S_{>0}(V) \cap \mathcal B_{\neu\pi}(V)$. By Lemma \ref{spec_continuity}, the set $\mathcal S_{\neu\pi}^{>0}(V)$ is an open subset in $\mathcal S_{\neu\pi}(V)$. 

It is an open question to determine exactly the domain of continuity in $\mathcal B_{\neu\pi}(V)$ of the Fuglede--Kadison determinant. However it is a general principle that it is ``as regular as possible'' on the open subset $\GL_{\neu\pi}(V)$; we will use an instance of this valid in the real-analytic category.

The space $\mathcal B_{\neu\pi}(V)$ is a Banach space for the norm operator (it is a norm-closed subspace of $\mathcal B(V)$). If $U \subset \RR^n$ is an open subset and $E$ a Banach space we say that a function $A : U \to E$ is {\em real-analytic} if it admits an expression as a convergent power series in the neighbourhood of every $x \in U$. 

We will use the habitual notations: for a $n$-tuple of integers $\alpha = (\alpha_1, \ldots, \alpha_n)$ and $x \in \RR^n$, let $| \alpha | = \sum_i \alpha_i$ and $x^\alpha = x_1^{\alpha_1} \cdots x_n^{\alpha_n}$.

The former real-analyticity condition can be conveniently formulated as follows: for every $x \in U$ there exists a sequence $(a_\alpha) \in E^{\NN^n}$ and $r_0 > 0$ such that $\sum_{\alpha} \|a_\alpha\|_E \, r_0^{|\alpha|} < +\infty$ and for any $v \in \RR^n$ such that $|v_i| < r_0$ for $1 \le i \le n$ (that is, $\|v\|_\infty < r_0$), we have $A(x+v) = \sum_{\alpha \in \NN^m} a_\alpha v^\alpha$.

\begin{lemma} \label{analytic_det}
  Let $V$ be a Hilbert-$\neu\pi$-module of finite type, $U$ an open subset of $\RR^n$ and $A : U \to \mathcal S_{\neu\pi}^{>0}(V)$ a real-analytic map. Then the map $x \mapsto \det_{\neu\pi} A(x)$ is a real-analytic function on $U$. 
\end{lemma}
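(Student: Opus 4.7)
My plan is to express the Fuglede--Kadison determinant via \eqref{def_FKdet} as $\detn_{\neu\pi}(A(x)) = \exp(\tr_{\neu\pi} \log A(x))$ and reduce the statement to the real-analyticity of $x \mapsto \log A(x)$ viewed as a map $U \to \mathcal{B}_{\neu\pi}(V)$. Composition with the trace $\tr_{\neu\pi}$ (which is a continuous linear functional on $\mathcal{B}_{\neu\pi}(V)$ since $V$ is of finite type) and then with the entire function $\exp$ will preserve real-analyticity and conclude. I will obtain $\log A(x)$ through the holomorphic functional calculus and prove its analytic dependence on $x$ by means of a Neumann series expansion of the resolvent.

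Fix $x_0 \in U$ and pick $0 < \lambda < M$ with $\sigma(A(x_0)) \subset [\lambda, M]$. Applying Lemma \ref{spec_continuity} to $A$ and to $-A$ (to control the infimum and supremum of the spectrum respectively) together with the continuity of $A$ furnishes an open neighborhood $U_0$ of $x_0$ on which $\sigma(A(x)) \subset [\lambda/2, 2M]$. Choose a simple closed rectifiable contour $\Gamma \subset \mathbb{C} \setminus (-\infty, 0]$ enclosing $[\lambda/2, 2M]$, so that the principal branch of $\log$ is holomorphic on $\Gamma$ together with its interior. The holomorphic functional calculus then gives, for every $x \in U_0$,
\[
\log A(x) = \frac{1}{2\pi i} \oint_\Gamma \log(z) \, (z - A(x))^{-1} \, dz,
\]
and the integrand takes values in $\mathcal{B}_{\neu\pi}(V)$ since the resolvent of an $\neu\pi$-equivariant operator remains $\neu\pi$-equivariant.

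The main step is to show that $x \mapsto \log A(x)$ is real-analytic on a polydisc $P \subset U_0$ centered at $x_0$. Setting $B(x) := A(x) - A(x_0)$ and $C := \sup_{z \in \Gamma} \|(z - A(x_0))^{-1}\| < \infty$, the Neumann series
\[
(z - A(x))^{-1} = \sum_{k \ge 0} (z - A(x_0))^{-1} \bigl[ B(x) (z - A(x_0))^{-1} \bigr]^k
\]
converges in operator norm, uniformly in $z \in \Gamma$, as soon as $\|B(x)\| < 1/C$, which defines a polydisc $P$ around $x_0$. Substituting the real-analytic expansion of $B(x)$ in $x - x_0$ and regrouping yields a convergent power series for $(z - A(x))^{-1}$ in $x - x_0$ with coefficients in $\mathcal{B}_{\neu\pi}(V)$ whose operator norms are uniformly bounded for $z \in \Gamma$. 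Term-by-term integration against $\log(z)$ along $\Gamma$ then produces a convergent power series representation for $\log A(x)$ on $P$. Applying the continuous linear functional $\tr_{\neu\pi}$ and then $\exp$ preserves real-analyticity, and since $x_0$ was arbitrary the conclusion follows. I expect the only delicate point to be the uniform control of the Neumann expansion along $\Gamma$; this however is immediate once the spectral gap is used together with compactness of $\Gamma$, so the rest of the argument is essentially formal.
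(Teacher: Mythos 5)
Your proof is correct, but the key step is handled by a genuinely different mechanism than in the paper. Both arguments share the same skeleton: reduce via $\det_{\neu\pi}(A(x)) = \exp(\tr_{\neu\pi}\log A(x))$, use that $\tr_{\neu\pi}$ is a norm-continuous linear functional on $\mathcal B_{\neu\pi}(V)$ (finite type) and that $\exp$ preserves analyticity, so that everything hinges on the real-analyticity of $x \mapsto \log A(x)$. For that step the paper stays entirely at the level of power series: it expands $\log$ as a scalar power series converging on an interval containing the spectra, substitutes the operator-valued expansion of $A(x+v)$, and checks absolute convergence of the composed series by a direct majorization; no functional calculus beyond the self-adjoint one is invoked. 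You instead represent $\log A(x)$ by a Riesz--Dunford contour integral and expand the resolvent $(z-A(x))^{-1}$ in a Neumann series around $A(x_0)$, uniformly in $z$ on the compact contour, then integrate term by term. Your route buys a cleaner treatment of the composition (no need to choose a base point for the expansion of $\log$, which is a slightly delicate point in the paper's step ii), at the cost of invoking holomorphic functional calculus and the standard facts that it agrees with the Borel calculus for self-adjoint operators and preserves $\neu\pi$-equivariance (the latter holds since $\mathcal B_{\neu\pi}(V)$ is a norm-closed subalgebra closed under inversion). The paper's route is more elementary and is phrased for an arbitrary analytic function $f$ in place of $\log$. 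Two small points to tighten in your write-up: the set $\{\|B(x)\| < 1/C\}$ is open but not literally a polydisc, so shrink to one inside it (and inside $U_0$); and for the term-by-term integration you need not just uniform boundedness of the coefficient norms in $z \in \Gamma$ but summability of $\sup_{z\in\Gamma}\|c_\alpha(z)\|\,r^{|\alpha|}$ for some $r>0$, which indeed follows from the geometric majorization of the regrouped Neumann series exactly as you indicate.
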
 

\begin{proof}
  We see from \eqref{def_FKdet} that we need to prove that the function $x \mapsto \tr_{\neu\pi} \log (A(x))$ is real-analytic. This will follow immediately from the two following points:
  \begin{enumerate}
  \item[i)]  If $\ell$ is a continuous linear form on a Banach space $\mathcal B$ and $x \mapsto T(x)$ a real-analytic map $U \to \mathcal B$ then $x \mapsto \ell(T(x))$ is real-analytic;

  \item[ii)] If $I \subset \RR$, $f$ is a real-analytic function with a power series expansion which converges on $I$, and $\sigma(A(x)) \subset I$ for all $x \in U$ then $x \mapsto f(A(x))$ is real-analytic. (We note that $f = \log$ satisfies this assumption for any $I \subset \interval o0{+\infty}o$.)
  \end{enumerate}
  Let us prove i): write $\sum_{\alpha \in \NN^n} T_\alpha v^\alpha$ a convergent power series  with a positive radius of convergence for $T$,  then by continuity of $\ell$ the series $\sum_{\alpha} \ell(T_{\alpha}) v^\alpha$ converges as well, and it equals the image of the former by the linear map $\ell$ on the domain of convergence.

  Now we prove ii): let $f(t) = \sum_k c_k t^k$ be a power series expression for $f \colon I \subset \RR \to \RR$, let $x \in U$ and let $\sum_{\alpha \in \NN^n} a_\alpha v^\alpha$ be the power series expansion for $A(x+v)$, which converges for $\|v\|_\infty \le r_1$ for some $r_1 > 0$. We denote $M = \sum_{\alpha \in \NN^n} \|a_\alpha\|_{\mathcal B}\, r_1^{|\alpha|}$. 

  Assuming $a_0 = 0$ for convenience, we want to show that the formal series $\sum_{k \ge 0} c_k \left(\sum_{\alpha \in \NN^n} a_\alpha v^\alpha \right)^k$ is convergent on a neighbourhood of $v = 0$. Let $r_0$ be the radius of convergence for $\sum_k c_k t^k$, then for $\|v\|_\infty < r_1r_0/M$ we have that $\sum_{\alpha \in \NN^n} \|a_\alpha\|_{\mathcal B}\, |v^\alpha| < r_0$ by convexity. 

  For those $v$ the series $\sum_k |a_k| \left(\sum_{\alpha \in \NN^n} \|a_\alpha\|_{\mathcal B} \, |v^\alpha| \right)^k$ converges; in other words the power series $\sum_{k \ge 0} a_k \left(\sum_{\alpha \in \NN^n} a_\alpha v^\alpha \right)^k$ is absolutely convergent on a neighbourhood of $0$. 
\end{proof}

%%%%%%%%%%%%%%%%%%%%%%%%%%%%%%%%%%%%%%%%%%%%%%%%%%%%%%%%%%%%%%%%%%%%%%%%%%%%%%%%

\section{Seifert manifolds and JSJ decomposition} \label{sec:Seifert}

% !TEX root = twistedL2.tex
In this subsection we prove Theorem \ref{seifert} from the introduction, the statement of which we recall here (we note that a similar statement, with a much simpler proof, also holds for torsions with unitary coefficients) :

\begin{theorem}
  Let $M$ be a compact aspherical 3--manifold and $N_1, \ldots, N_r$ be the hyperbolic components in the JSJ decomposition of $M$. Let $\rho \colon \pi_1(M) \to \SL_2(\mathbb C)$ be a representation such that for any Seifert piece $N\subset M$, the restriction $\rho\vert_{\pi_1(N)}$ is irreducible, and for every hyperbolic piece $C_*(N_i, \neu\pi_1(N_i) \otimes \rho|_{\pi_1(N_i)})$ is determinant-class. Then we have
  \[
  \tau^{(2)}(M ; \rho) = \prod_{i=1}^r \tau^{(2)}(N_i ; \rho).
  \]
  In particular if $M$ is Seifert or more generally a graph manifold then $\tau^{(2)}(M ; \rho) = 1$.
\end{theorem}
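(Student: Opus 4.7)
The plan is to apply a Mayer--Vietoris type gluing formula to the JSJ decomposition of $M$, reducing the computation of $\tau^{(2)}(M;\rho)$ to the torsion on each JSJ piece and each JSJ torus. Proposition \ref{prop:Seifert} then handles the Seifert pieces and a direct Fourier-analytic computation handles the tori. Write $\pi = \pi_1(M)$, let $\{N_v\}$ denote the JSJ pieces (both the hyperbolic $N_i$ and the Seifert $N'_j$) and $\{T_e\}$ the JSJ tori. Since $M$ is aspherical with incompressible JSJ tori, the inclusions $\pi_1(N_v) \hookrightarrow \pi$ and $\pi_1(T_e) \hookrightarrow \pi$ are all injective, and the cellular Mayer--Vietoris decomposition yields a short exact sequence of complexes of finitely generated Hilbert $\neu\pi$-modules
\[
0 \longrightarrow \bigoplus_{e} C_*^{(2)}(T_e, \neu\pi \otimes \rho) \longrightarrow \bigoplus_{v} C_*^{(2)}(N_v, \neu\pi \otimes \rho) \longrightarrow C_*^{(2)}(M, \neu\pi \otimes \rho) \longrightarrow 0.
\]
By the induction formula for $L^2$-invariants along injective group inclusions (see \cite[Chapter 3]{Lueck}), the complexes on each piece and each torus with $\neu\pi$ coefficients are determinant class and have the same $L^2$-torsion as their counterparts computed using the smaller group von Neumann algebras $\neu \pi_1(N_v)$ or $\neu\pi_1(T_e)$.

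For the Seifert pieces, Proposition \ref{prop:Seifert} directly gives $L^2$-acyclicity, determinant class, and $\tau^{(2)}(N'_j; \rho|_{\pi_1(N'_j)}) = 1$. For a JSJ torus $T_e$, the group $\pi_1(T_e) \cong \mathbb Z^2$ is abelian, so the twisted cellular complex is of determinant class by Remark \ref{detclass}. An explicit Fourier analysis using the isomorphism $\neu\mathbb Z^2 \cong L^\infty(\mathbb T^2)$ (under which the twisted cellular complex of $T_e$ becomes a measurable family of Koszul complexes parametrized by $\mathbb T^2$) shows that the complex is weakly $L^2$-acyclic and that its Fuglede--Kadison determinants combine via the alternating product \eqref{defn_tors_alg} to yield $\tau^{(2)}(T_e; \rho|_{\pi_1(T_e)}) = 1$, using unimodularity of $\rho|_{\pi_1(T_e)}$. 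The hyperbolic pieces contribute $\tau^{(2)}(N_i; \rho|_{\pi_1(N_i)})$, finite and nonzero by hypothesis.

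The long exact sequence in reduced $L^2$-homology attached to the Mayer--Vietoris sequence, together with acyclicity of both outer terms, forces weak acyclicity of the middle term. The additivity of $L^2$-torsion along short exact sequences \cite[Theorem 3.35(1)]{Lueck} then yields
\[
\tau^{(2)}(M; \rho) = \frac{\prod_{v} \tau^{(2)}(N_v; \rho|_{\pi_1(N_v)})}{\prod_{e} \tau^{(2)}(T_e; \rho|_{\pi_1(T_e)})} = \prod_{i=1}^{r} \tau^{(2)}(N_i; \rho|_{\pi_1(N_i)}),
\]
because the Seifert and torus factors all equal $1$. If $M$ is a graph manifold there are no hyperbolic pieces and the product is empty, giving $\tau^{(2)}(M;\rho) = 1$.

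The main technical obstacle I anticipate is the torus computation: while $\chi(T^2) = 0$ makes the vanishing of the torsion plausible, an honest proof must verify weak $L^2$-acyclicity (the twisted Koszul differential must be injective outside a measure-zero subset of characters of $\mathbb Z^2$, which relies on $\rho|_{\pi_1(T_e)}$ taking values in $\SL_2(\CC)$) as well as the precise cancellation of Fuglede--Kadison determinants across degrees, where unimodularity again enters. A secondary concern is that the additivity formula \cite[Theorem 3.35(1)]{Lueck} contains an error term involving the connecting homomorphism in the long exact $L^2$-homology sequence; under our determinant-class and acyclicity assumptions on the outer terms this error term vanishes, but it must be verified carefully.
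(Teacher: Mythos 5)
Your proposal is correct and follows essentially the same route as the paper's proof: the same Mayer--Vietoris short exact sequence over the JSJ tori, the induction/restriction principle to pass between $\neu\pi$- and $\neu\pi_1(N_v)$-coefficients, additivity of the $L^2$-torsion for the gluing, Proposition \ref{prop:Seifert} for the Seifert pieces, and the vanishing contribution of the JSJ tori. The torus computation you flag as the main obstacle is exactly what the paper's Lemma \ref{torus} supplies (reduction to one-dimensional characters, via semisimplification and continuity of the Fuglede--Kadison determinant over abelian groups, followed by the explicit cancellation of Lemma \ref{decomposition}), which is the same calculation your Fourier-analytic sketch would carry out over $L^\infty(\mT^2)$.
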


\begin{proof}
  Since $M$ is aspherical it is in particular irreducible and we can perform a JSJ-decomposition of $M$. Its JSJ-components are either hyperbolic or Seifert fibered manifolds. The theorem then follows immediately from the following two claims :
  \begin{enumerate}
  \item \label{gluing} If $N_1, N_2$ are compact with toric boundary, $N$ is a gluing of $N_1, N_2$ along a collection of incompressible boundary components then
    \[
    \tau^{(2)}(N, \rho) = \tau^{(2)}(N_1,\rho|_{\pi_1(N_1)})\tau^{(2)}(N_2,\rho|_{\pi_1(N_2)}).
    \]

  \item\label{seifert_only}  For any $\rho \colon \pi_1(N) \to \SL_2(\CC)$, if $N$ is Seifert (compact with toric boundary components) then the complex $C_*^{(2)}(N,  \neu\pi_1(N) \otimes \rho\vert_{\pi_1(N)})$ is $L^2$-acyclic and of determinant class. Moreover, if $\rho$ is an irreducible representation of $\pi_1(N)$ then $\tau^{(2)}(N ; \rho) = 1$. 
  \end{enumerate}
  Let $N = N_1 \cup N_2$ as in the statement of Claim \eqref{gluing} and $T_1, \ldots, T_r$ the boundary tori along which $N_1$ and $N_2$ are glued to each other. Let $\pi = \pi_1(N)$, for $X = N_1, N_2, T_j$ we denote by $C_*^{(2)}(X, \rho)$ the twisted complex of Hilbert $\neu\pi$-modules associated to the preimage of $X$ in the universal cover $\wdt N$ (by incompressibility of the $T_j$ this is a disjoint union of copies of $\wdt X$). By the restriction law \cite[Theorem 6.7(6)]{Lueck_twisted} we have that
  \[
  \tau^{(2)}(C_*^{(2)}(X, \rho)) = \tau^{(2)}(X, \rho|_{\pi_1(X)})
  \]
  for those $X$. As we have an exact sequence
  \[
  0 \to \bigoplus_{j=1}^r C_*^{(2)}(T_j, \rho) \to C_*^{(2)}(N_1, \rho) \oplus C_*^{(2)}(N_2, \rho) \to C_*^{(2)}(N, \rho) \to 0
  \]
  it follows by using the gluing formula \cite[Theorem 6.7(3)]{Lueck_twisted} that
  \[
  \tau^{(2)}(N, \rho) = \frac{\tau^{(2)}(N_1,\rho|_{\pi_1(N_1)}) \cdot \tau^{(2)}(N_2,\rho|_{\pi_1(N_2)})} {\prod_{j=1}^r \tau^{(2)}(T_j, \rho|_{\pi_1(T_j)})}. 
  \]
  In Lemma \ref{torus} below we show that the $L^2$-torsion of a 2-torus with coefficients in any local system is 1 so $\tau^{(2)}(T_j, \rho|_{\pi_1(T_j)}) = 1$ for all $j$ and we finally obtain the formula in Claim \eqref{gluing}. 

Claim \eqref{seifert_only} is a subcase of the more general Proposition \ref{seifert_proof} that we prove below. The arguments are essentially lifted from \cite{Kitano} and adapted to the $L^2$-setting. 
\end{proof}

We now prove a lemma on tori used in the proof above and which we will also need to deal with Seifert manifolds (it follows from general Poincaré duality but we prefer to give a more explicit proof). 

\begin{lemma} \label{torus}
  Let $\rho : \mathbb Z^2 \to \SL(V)$ be any unimodular representation, $\Lambda$ an infinite group and $\ZZ^2 \to \Lambda$ a surjective morphism. Then the complex $C_*^{(2)}(\mathbb T^2, \mathcal N\Lambda \otimes \rho)$ is $L^2$-acyclic and of determinant class, and its torsion $\tau^{(2)}(\mathbb T^2, \mathcal N\Lambda \otimes \rho) = 1$. 
\end{lemma}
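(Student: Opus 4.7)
The plan is to use a CW structure on $\mT^2$ with a single $0$-cell, two $1$-cells $a, b$, and a single $2$-cell attached along the commutator $[a,b]$. Fox calculus on the relator $[a,b]=aba^{-1}b^{-1}$ produces a chain complex of $\CC\pi$-modules
\[
0 \to \CC\pi \xrightarrow{\binom{1-b}{a-1}} \CC\pi^2 \xrightarrow{(a-1,\; b-1)} \CC\pi \to 0
\]
with $\pi = \pi_1(\mT^2) = \mZ^2 = \langle a, b \rangle$. Twisting by $\rho \colon \pi \to \SL(V)$ and passing to the cover $\pi \twoheadrightarrow \Lambda$ replaces the generators in the boundary matrices by $\tilde a = \bar a \otimes \rho(a)$ and $\tilde b = \bar b \otimes \rho(b)$, acting on $\neu\Lambda \otimes V$, where $\bar a, \bar b \in \Lambda$ are the images of $a, b$.

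Since $\Lambda$ is a finitely generated abelian group and is infinite by hypothesis, at least one of $\bar a, \bar b$ has infinite order; after relabelling edges if necessary, I may assume it is $\bar a$. I would then apply Lemma \ref{decomposition} with $k = l = \dim V$: after permuting the middle basis so that the $b$-copies come first, the first $\dim V$ rows of $d_2$ and the last $\dim V$ columns of $d_1$ both give the same square matrix $A' = B' = \tilde a - 1$. Conditional on the hypotheses of the lemma being satisfied for this choice, the conclusion yields $L^2$-acyclicity and the determinant-class property for the full complex, together with
\[
\tau^{(2)}(\mT^2,\, \neu\Lambda \otimes \rho) \;=\; \detn_{\neu\Lambda}(\tilde a - 1)^{-1}\,\detn_{\neu\Lambda}(\tilde a - 1) \;=\; 1.
\]

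It remains to verify that $\tilde a - 1 \in M_{\dim V}(\neu\Lambda)$ is $L^2$-acyclic (injective with dense range on $\neu\Lambda \otimes V$) and of determinant class. The determinant-class property is free from Remark \ref{detclass}, since $\Lambda$ abelian is in particular virtually abelian. For $L^2$-acyclicity I would use the Fourier isomorphism $\neu\Lambda \cong L^\infty(\widehat\Lambda)$; under it, $\tilde a - 1$ becomes multiplication on $L^2(\widehat\Lambda, V)$ by the matrix-valued function $\chi \mapsto \chi(\bar a)\rho(a) - 1_V$. Since $\bar a$ has infinite order, the continuous homomorphism $\widehat\Lambda \to \mS^1$, $\chi \mapsto \chi(\bar a)$, is a surjection of compact abelian groups, so it pushes Haar measure to Haar measure and in particular pulls null sets back to null sets; the set of $z \in \mS^1$ for which $z\rho(a) - 1$ is singular on $V$ is the finite set of inverses of eigenvalues of $\rho(a)$ lying on $\mS^1$, hence is null. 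Fiberwise invertibility on a conull set of characters then gives trivial kernel and dense image for $\tilde a - 1$. I expect the main subtlety of the proof to be precisely this step: $\tilde a - 1$ is in general \emph{not} in $\GL_{\neu\Lambda}(\neu\Lambda \otimes V)$, since the norm of its fiberwise inverse blows up as $\chi(\bar a)$ approaches an inverse eigenvalue of $\rho(a)$, so one must work at the level of weakly invertible morphisms of Hilbert modules and lean on Remark \ref{detclass} rather than on any spectral-gap argument.
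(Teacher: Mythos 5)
Your proof is correct, and while it follows the paper's overall skeleton --- the same one-vertex CW structure on $\mT^2$, the same cancellation $\detn_{\neu\Lambda}(A')^{-1}\detn_{\neu\Lambda}(B')=1$ via Lemma \ref{decomposition}, and Remark \ref{detclass} for the determinant-class property over the abelian group $\Lambda$ --- it treats the one genuinely analytic point by a different route. The paper first reduces to a $1$-dimensional representation: it semisimplifies $\rho$ by conjugating so that the commuting matrices $\rho(\ell),\rho(m)$ converge to a pair of commuting semisimple ones, invokes continuity of $\rho \mapsto \tau^{(2)}(\mT^2,\rho)$ (using the explicit formula for Fuglede--Kadison determinants over abelian groups), splits into characters, and only then feeds the resulting scalar complex into Lemma \ref{decomposition}. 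You instead keep $\rho$ intact and prove directly that $\tilde a - 1$ is weakly invertible, by identifying $\neu\Lambda$ with $L^\infty(\widehat\Lambda)$ and checking a.e.\ invertibility of the matrix symbol $\chi\mapsto \chi(\bar a)\rho(a)-1$, having first chosen a generator whose image in $\Lambda$ has infinite order so that $\chi\mapsto\chi(\bar a)$ pushes Haar measure to Haar measure on $\SS^1$. Your route buys two things: it bypasses the semisimplification-and-continuity step entirely, and the explicit choice of generator addresses a point the paper leaves implicit --- as written, the paper's computation uses the generator $\ell$ without verifying that $1-\ell\otimes\rho(\ell)$ is weakly invertible, which can fail (for instance if $\ell$ dies in $\Lambda$ and $\rho(\ell)$ has eigenvalue $1$), in which case one must, exactly as you do, relabel and use the other generator, whose operator is weakly invertible because $\Lambda$ is infinite. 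What the paper's reduction buys in exchange is that the final computation is purely scalar and the weak acyclicity of the whole complex is immediate from the character picture; in your version that acyclicity is delivered by the conclusion of Lemma \ref{decomposition}, which is fine. (A cosmetic remark: the operator $\tilde a -1$ acts on the Hilbert module $\ell^2(\Lambda)\otimes V$, i.e.\ $L^2(\widehat\Lambda,V)$, rather than on $\neu\Lambda\otimes V$; your argument is unaffected.)
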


\begin{proof}
The determinant class condition is always satisfied (see Remark \ref{detclass}). So we need to compute the torsion and show it equals 1. 

First we reduce to the case where $\rho$ is semisimple. Let $m, \ell$ be generators for $\pi_1(\mathbb T^2)$. As $\rho(m)$ and $\rho(\ell)$ commute they are trigonalizable with Jordan blocks of the same shape. It follows that there exists a sequence $g_n \in \SL(V)$ such that $\rho_n(\ell) := g_n\rho(\ell)g_n^{-1}$ and $\rho_n(m) :=g_n\rho(m)g_n{-1}$ converge to a pair of commuting semisimple elements $\rho_\infty(\ell), \rho_\infty(m) \in \SL(V)$. By Lemma \ref{conj_indep} we have $\tau^{(2)}(\mathbb T^2, \rho) = \tau^{(2)}(\mathbb T^2, \rho_n)$. It follows from the formula for the Fuglede--Kadison determinant over abelian groups that $\rho \mapsto \tau^{(2)}(\mathbb T^2, \rho)$ is continuous. So $\tau^{(2)}(\mathbb T^2, \rho) = \tau^{(2)}(\mathbb T^2, \rho_\infty)$.

We can now assume $\rho$ to be semisimple, so we can decompose $\rho \cong \bigoplus_{j=1}^{\dim(V)} \chi_j$ where $\chi_j$ are 1-dimensional. It follows that $\tau^{(2)}(\mathbb T^2, \rho) = \prod_j \tau^{(2)}(\mathbb T^2, \chi_j)$ so we can just assume that $\rho$ is 1-dimensional.

Now the complex of which we have to compute the $L^2$-torsion is just
\[
0 \to \ell^2(\ZZ^2) \xrightarrow{d_1} \ell^2(\ZZ^2)^2 \xrightarrow{d_0} \ell^2(\ZZ^2) \to 0
\]
with the boundary operators given by the following matrices over the group ring:
\begin{align*}
  d_1 &= \begin{pmatrix} 1- \ell \otimes \rho(\ell) \\ 1-m \otimes \rho(m) \end{pmatrix} \\
  d_0 &= \begin{pmatrix} m \otimes \rho(m) - 1 & \ell \otimes \rho(\ell) - 1 \end{pmatrix}. 
\end{align*}
It is immediate that the complex is $L^2$-acyclic, and it follows from Lemma \ref{decomposition} that the torsion is equal to $\det_{\neu\Lambda}(1- \ell \otimes \rho(\ell))^{-1} \det_{\neu\Lambda}(1- \ell \otimes \rho(\ell)) = 1$. 
\end{proof}

%proof using general nonsense% \begin{proof} Let $\rho^*$ be the representation dual to $\rho$. By Poincar\'e duality \cite[Theorem 6.7(7)]{Lueck_twisted} we have that \[\tau^{(2)}(\mathbb T^2, \rho^*) \cdot \tau^{(2)}(\mathbb T^2, \rho) = a \] where $a$ is the image of an element in the Whitehead group of $\mathbb Z^2$ by a morphism to $\mathbb R^{\times}$. Since the Whitehead group of $\mathbb Z^2$ is trivial we have $a = 1$. On the other hand the representation $\rho$ is self-dual: in a diagonalising basis, if the matrix of $\rho(x)$ has diagonal coefficients $\lambda_x, \lambda_x^{-1}$ then the matrix of $\rho^*(x)$ in the dual basis is diagonal with coefficients $\lambda_x^{-1}, \lambda_x$. It follows that $\tau^{(2)}(\mathbb T^2, \rho^*) = \tau^{(2)}(\mathbb T^2, \rho)$, so $\tau^{(2)}(\mathbb T^2, \rho)^2 = 1$ and since it is a positive number it equals 1. \end{proof}

\begin{proposition} \label{seifert_proof}
  Let $N$ be a compact Seifert manifold. Let $\rho$ be an irreducible representation of $\pi_1(N)$. Then the complex $C_*(N, \rho)$ is weakly $L^2$-acyclic and of determinant class, and $\tau^{(2)}(N; \rho) = 1$. 
\end{proposition}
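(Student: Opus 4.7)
The plan is to follow Kitano's strategy for Reidemeister torsion of Seifert fibered spaces, translated into the $L^2$-setting using the decomposition lemma (Lemma \ref{decomposition}) and the kind of cancellations that made $\tau^{(2)}(\mathbb T^2, \rho) = 1$ in Lemma \ref{torus}.

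The first step is to reduce to the case $\rho(h) = \epsilon \cdot \mathrm{Id}$, where $h \in \pi_1(N)$ is the regular fiber and $\epsilon \in \{\pm 1\}$. For a compact orientable Seifert manifold whose base orbifold is orientable, the class $h$ generates a normal infinite cyclic subgroup that is central in $\pi_1(N)$. In the remaining cases, $h$ is central only in a subgroup of index at most $2$; one then passes to the corresponding finite cover and uses multiplicativity of $\tau^{(2)}$ under finite covers \cite[Theorem 6.7(5)]{Lueck_twisted}. Since $\rho$ is irreducible with values in $\SL_2(\CC)$, Schur's lemma applied to the central element $h$ gives $\rho(h) = \pm \mathrm{Id}$.

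The second step is to pick a standard Seifert presentation
\[
\pi_1(N) = \langle a_1, b_1, \ldots, a_g, b_g, q_1, \ldots, q_k, h \mid [a_i, h], [b_i, h], [q_j, h], q_j^{\alpha_j} h^{\beta_j}, R \rangle
\]
(with the surface relation $R$ present only in the closed case) and to write down the twisted $L^2$-chain complex using Fox calculus. Because $\rho(h) = \epsilon \mathrm{Id}$, the twisted Fox matrices organize into block form: the $h$-part, made of scalar operators $1 - \epsilon h$ coming from the $h$-derivatives of the commutation and singular-fiber relations, and the base part, which reflects a presentation of the base orbifold $\pi_1^{\mathrm{orb}}(\Sigma)$. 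Applying Lemma \ref{decomposition} iteratively, the torsion of $C_*^{(2)}(N, \rho)$ becomes a product of Fuglede--Kadison determinants which I would then show equals $1$ by exactly the cancellation mechanism used in the proof of Lemma \ref{torus}: each factor $\det_{\neu\pi}(1 - \epsilon h)$ and each factor coming from a singular-fiber relation appearing in even degree is matched by an identical factor from the adjacent odd degree, and the base-orbifold contributions cancel similarly since the base is a $2$-orbifold of zero Euler number once the fiber is stripped away.

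The main technical obstacle will be twofold. First, one needs to verify weak $L^2$-acyclicity and determinant class for every intermediate sub-complex arising in the iterated application of Lemma \ref{decomposition}. For the fiber operator $1 - \epsilon h$, determinant class is classical: viewing $\langle h \rangle \simeq \ZZ$ and using that the inclusion $\neu\ZZ \hookrightarrow \neu\pi$ is trace-preserving, one computes $\det_{\neu\pi}(1 - \epsilon h)$ as a Mahler measure over $\neu\ZZ \simeq L^\infty(S^1)$, which is $1$ for $1 \pm x$; the singular-fiber operators $1 - \epsilon^{\beta_j} q_j^{\alpha_j}$ are handled by the same abelian argument applied to the cyclic subgroup generated by $q_j$. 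Second, one has to keep the Fox-calculus bookkeeping straight in both the closed case (where an extra $3$-cell contributes a top-degree block) and the bounded case (where the complex is at most $2$-dimensional); this is the direct $L^2$-counterpart of Kitano's combinatorial argument and should be carried out in parallel to it.
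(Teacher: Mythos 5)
The reduction to $\rho(h)=\pm\mathrm{Id}$ via Schur's lemma, the determinant computations for operators like $1-q_j\otimes\rho(q_j)$ with unitary eigenvalues, and the double-cover trick for a non-orientable base all match what is actually needed. The genuine gap is in your second step: you propose to obtain the twisted $L^2$-chain complex of $N$ by Fox calculus on the standard global Seifert presentation (plus one extra $3$-cell in the closed case). But Fox calculus computes the chain complex of the \emph{presentation $2$-complex}, and $\tau^{(2)}$ is an invariant of the (simple) homotopy type of $N$, not of $\pi_1(N)$. The standard Seifert presentation is far from geometric: it carries $2g+k+r$ commutation relations $[x,h]$ in addition to the $r$ fiber relations and the surface relation, so the presentation complex has Euler characteristic roughly $1+r$ (and adding a $3$-cell in the closed case still gives $\chi=r\neq 0=\chi(N)$). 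Consequently that complex is not homotopy equivalent to $N$, it is not even weakly $L^2$-acyclic (the Euler characteristic forces a nonzero $L^2$-Betti number in degree $2$), and its ``torsion'' is $0$ by the paper's convention rather than $\tau^{(2)}(N;\rho)$. So the cancellation scheme you describe cannot get off the ground as stated; ``keeping the Fox-calculus bookkeeping straight'' is not a technicality but the missing idea.

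What is needed (and what the paper, following Kitano, does) is to replace the global presentation by a geometric decomposition: write $N=N_0\cup\bigcup_i T_i$ with $N_0$ a trivial circle bundle over a surface with boundary and $T_i$ fibered solid tori around the exceptional fibers, apply the gluing formula along the separating tori --- whose contribution is $1$ by Lemma \ref{torus} --- compute each $T_i$ through its retraction to the core circle (where $\det_{\neu\pi}(1-q_i\otimes\rho(q_i))=1$ precisely because $\rho(h)=\pm\mathrm{Id}$ forces $\rho(q_i)$ to have unitary eigenvalues), and compute $N_0$ through its spine $Y=\mathbb S^1\times(\text{bouquet of circles})$, whose product CW structure yields an honest geometric chain complex to which the Kitano-type matrices and Lemma \ref{decomposition} apply. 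If you want to keep a ``single presentation'' flavor you would have to exhibit a presentation realized by a spine of $N$ (with the correct cell count), which in practice amounts to the same decomposition. Finally, in the non-orientable case note that the restriction of $\rho$ to the index-two subgroup may become reducible, so the covering argument has to be run with the reducible formula of Remark \ref{remk:abel} in mind rather than quoting the irreducible case verbatim.
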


\begin{proof}
  We will denote by $h \in \pi_1(N)$ the class of the generic fiber. Weak acyclicity and determinant class property follow from the explicit computation we will make for the complexes involved. We decompose $N$ as $N_0 \cup \bigcup_{i=1}^r T_i$, where $N_0$ is a trivial $\mathbb S^1$-bundle over a surface $S$ with genus $g$ and $k+r$ boundary components ($k$ is the number of boundary components of $N$), and the $T_i$ are neighborhoods of the exceptional fibers, each of them homeomorphic to a solid torus. We will assume that $S$ is orientable; in particular we have the following presentation for the fundamental group of $N$: 
  \begin{multline}
    \pi_1(N) = \langle a_1, b_1 \ldots, a_g, b_g, c_1, \ldots, c_k, q_1, \ldots q_r, h \\ \mid hx = xh\,  \forall x \in \pi_1(N), q_i^{p_i}h_i^{q_i}=1,  [a_1, b_1] \cdots [a_g, b_g] c_1 \cdots c_k q_1\ldots q_r  =1\rangle
  \end{multline}
  Lemma \ref{torus} yields
  \[
  \tau^{(2)}(\partial T_i, \mathcal N \pi_1(T_i) \otimes \rho|_{\pi_1(\partial T_i)}) = 1.
  \]
  Applying the multiplicativity formula for the $L^2$-torsion we get
  \[
  \tau^{(2)}(N, \rho) = \tau^{(2)}(N_0, \mathcal N\pi_1(N) \otimes \rho|_{\pi_1(N_0)}) \prod_{i=1}^r \tau^{(2)}(N_i, \rho|_{\pi_1(N_i)})
  \]
  First we prove that $ \tau^{(2)}(N_i, \rho|_{\pi_1(N_i)})=1$, and then that $\tau^{(2)}(N_0, \mathcal N\pi_1(N)\otimes \rho|_{\pi_1(N_0)})=1$.

  Since $\rho$ is irreducible  and $h$ is central in $\pi_1(N)$, necessarily $\rho(h) = \pm \mathrm{Id}$, and it follows that for each $i$, the generator $q_i$ of $\pi_1(T_i)$ has finite order through~$\rho$. In particular its eigenvalues $\lambda_i, \lambda^{-1}_i$ are unitary complex numbers, and the operator $1- q_i \otimes \rho(q_i)  \colon \ell^{(2)}(\pi_1(T_i))^2 \to \ell^{(2)}(\pi_1(T_i))^2$ has Fuglede--Kadison determinant equal to $1$. Since $T_i$ retracts on the circle $q_i$, its $L^2$ torsion is $\tau^{(2)}(T_i, \rho_{\pi_1(T_i)}) = \det\nolimits_{\mathcal N\pi_1(T_i)} (1- q_i \otimes \rho(q_i))^{-1} = 1$, and the first assertion is proved.

  Now $N_0$ retracts onto 2-complex $Y$ which is a product of a circle with a bouquet of $2g+k+r-1$ circles indexed by $a_1, b_1 \ldots, a_g,b_g, c_1, \ldots, c_k, q_1, \ldots q_{r-1}$ (the last generator $q_r$ does not appear thanks to the last relation in $\pi_1(N)$).

 By the same computation as in \cite[Proof of Proposition 4.2]{Kitano} the differentials in the $\pi_1(N)$-complex $C_*(Y, \mathcal N\pi_1(N) \otimes \rho|_{\pi_1(Y)})$  are given by
  \[
  A = (a_1 \otimes \rho(a_1) - 1 \cdots b_g \otimes \rho(b_g)- 1 \, c_1 \otimes \rho(c_1)  -1 \ldots q_{r-1} \otimes \rho(q_{r-1}) -1 \,  h \otimes \rho(h) - 1)
  \]
  and
  \[
  B =
  \begin{pmatrix}
    1 - h \otimes \rho(h) & 0      & \cdots &  \\
    0                    & \ddots & \ddots &   \\
    \vdots               & \ddots & \ddots &   \\
    0                    & \cdots &        & 1 - h \otimes \rho(h) \\
    1-a_1 \otimes \rho(a_1)  & 1-b_1 \otimes \rho(b_1) & \cdots & 1 -q_{r-1} \otimes \rho(q_{r-1})
  \end{pmatrix}
  \]
  By Lemma \ref{decomposition} the $L^2$-torsion of the complex
  \[
  0 \to L^2(\pi_1(N))^{2g+k+r-1} \xrightarrow{B} L^2(\pi_1(N))^{2g+k+r} \xrightarrow{A} L^2(\pi_1(N)) \to 0
  \]
  is equal to $\det_{\mathcal N\pi_1(N)}(B') \det_{\mathcal N\pi_1(N)}(A')^{-1}$ where
  \[
  B' =
  \begin{pmatrix}
    1 -  h \otimes \rho(h)  & 0      & 0 &  \\
    0                    & \ddots & 0 &   \\
    0                    & 0 & 1 -  h \otimes \rho(h)  \\
  \end{pmatrix}
  \]
  is the matrix obtained by keeping all but the last line of $B$ and 
  $$A' = (1 - h \otimes \rho(h) )$$ 
  is the matrix obtained by deleting all but the last column of $A$. Both of these Fuglede--Kadison determinants equal 1. So $\tau^{(2)}(Y, N\pi_1(N) \otimes \rho|_{\pi_1(Y)})= 1$, and by homotopy invariance \cite[Theorem 6.7(2)]{Lueck_twisted} it follows that $\tau^{(2)}(N_0, \mathcal N\pi_1(N) \otimes \rho|_{\pi_1(N_0)}) = 1$ as well, and it proves the second assertion and finishes the proof of Proposition \ref{seifert_proof} in this case.

  \medskip

  It remains to deal with the case where $S$ is not orientable; this can be done following a similar scheme as above (see \cite{Kitano} for the classical case). A simpler argument in our setting is to apply the orientable case to the double cover associated to $\pi_1(N) \to \pi_1(S) \to \ZZ/2$ and use multiplicativity of the twisted $L^2$-torsion in covers \cite[Theorem 6.7(5)]{Lueck_twisted}. 
\end{proof}

\begin{remark} \label{remk:abel}
  If $\rho$ is not irreducible then the arguments above are still valid except that the image $\rho(h)$ of the generic fiber can have any complex number as an eigenvalue, hence $\det_{\mathcal N\pi_1(N)}(1 -  h \otimes \rho(h) )$ is not necessarily equal to 1. In general, denoting by $\lambda_\rho$ an eigenvalue of $\rho(h)$ of modulus $\ge 1$, we have
 \begin{equation} \label{reducible}
 \tau^{(2)}(N, \rho) = |\lambda_\rho|^{\sum\limits_{i=1}^r -\frac{q_i}{p_i} + 2g+k+r-2}
 \end{equation}
 where $N$ is a Seifert manifold with base an orientable orbifold of genus g, with $k$ cusps and $r$ singular fibers with singularities $(p_i, q_i)$.

  For example if $M(p, q)$ is the complement of the $(p, q)$-torus knot we have
  \[
  \tau^{(2)}(M(p, q), \rho) = |\lambda_\rho|^{1-\frac 1 p -\frac 1 q}. 
  \] 
  which can also be computed directly by a simpler method, using the presentation $\pi_1(M(p,q)) = \langle a,b \mid a^p=b^q \rangle$.
\end{remark}
\begin{proof}[Proof of Remark \ref{remk:abel}]
With the same matrices as in the proof of Proposition \ref{seifert_proof}, we just have to compute the determinants involved, namely
\begin{align*}
\det\nolimits_{\mathcal N\pi_1(N)} (1-q_i \otimes \rho(q_i)) &= |\lambda_\rho|^{q_i/p_i}, \\
\det\nolimits_{\mathcal N\pi_1(N)} (B') &= |\lambda_\rho|^{2g+k+r}, \\
 \det\nolimits_{\mathcal N\pi_1(N)} (A')&= |\lambda_\rho| 
 \end{align*}
 and the result \eqref{reducible} follows. 
\end{proof}

%%%%%%%%%%%%%%%%%%%%%%%%%%%%%%%%%%%%%%%%%%%%%%%%%%%%%%%%%%%%%%%%%%%%%%%%%%%%%%%%

\section{Twisted $L^2$ torsion for hyperbolic manifolds.} \label{sec:FKdet}

% !TEX root = twistedL2.tex
In this section we conclude the proof of Theorem \ref{theo:Analyticity}, whose statement we recall here:

\begin{theorem}
  The twisted $L^2$-torsion function 
  \[
  \todeux \colon X(M) \to \mR
  \]
  is real-analytic on an analytic neighborhood $U$ of any lift $[\rho_0]$ of the holonomy representation of $M$ in the character variety $X(M)$.
\end{theorem}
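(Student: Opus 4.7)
The plan is to exhibit $\tau^{(2)}(M,\rho)$ as a composition of real-analytic maps by combining the definition \eqref{defn_tors_alg} with the regularity result Lemma \ref{analytic_det}. The whole argument hinges on verifying a uniform spectral gap for the combinatorial Laplacians in a neighbourhood of $\rho_0$.

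\textbf{Step 1: reduction to representations.} By Lemma \ref{section}, there is a real-analytic section $s \colon V \to R(M)$ of the projection $R(M) \to X(M)$ defined on a neighbourhood $V$ of $[\rho_0]$; combined with Lemma \ref{conj_indep}, it suffices to show that $\rho \mapsto \tau^{(2)}(M,\rho)$ is real-analytic on $s(V)$, since any two lifts differ by conjugation. Fix once and for all an equivariant CW-structure on $\widetilde M$ and an ordered basis of $\CC^2$, so that the twisted boundary operators $d_p(\rho)$ of $C_*^{(2)}(M,\mathcal N\pi\otimes\rho)$ are represented by explicit matrices over $\CC\pi$.

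\textbf{Step 2: analytic dependence of the Laplacians.} Each entry of the matrix for $d_p(\rho)$ is a finite $\CC\pi$-linear combination of the matrix coefficients of $\rho(\gamma)$ for finitely many $\gamma \in \pi$; since these depend polynomially on $\rho \in R(M)$, the map $\rho \mapsto d_p(\rho) \in \mathcal B_{\neu\pi}(C_p^{(2)})$ is real-analytic in the operator-norm topology (here one uses that $\|g \otimes A\|_{\mathcal B(\ell^2(\pi) \otimes \CC^2)} = \|A\|$ to control operator norms in terms of matrix coefficients). Taking adjoints is continuous affine, so
\[
  \Delta_p(\rho) = d_p(\rho)\, d_p(\rho)^* + d_{p+1}(\rho)^*\, d_{p+1}(\rho)
\]
is also a real-analytic map from $V$ to $\mathcal S_{\neu\pi}(C_p^{(2)})$.

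\textbf{Step 3: spectral gap.} The key input is that at $\rho_0$ each $\Delta_p(\rho_0)$ has spectrum bounded away from $0$, which is the strong acyclicity statement of Bergeron--Venkatesh \cite{BV} (extended to the cusped setting as in Wa{\ss}erman's thesis \cite{Wasserman}). Granted this, Lemma \ref{spec_continuity} applied to the analytic family $\rho \mapsto \Delta_p(\rho)$ yields an open neighbourhood $U \subset V$ of $[\rho_0]$ on which $\inf \sigma(\Delta_p(\rho)) > 0$ for every $p$, i.e.\ $\Delta_p(\rho) \in \mathcal S_{\neu\pi}^{>0}(C_p^{(2)})$. In particular, on $U$ the complex is weakly $L^2$-acyclic and of determinant class, so the twisted $L^2$-torsion is well-defined and non-zero there.

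\textbf{Step 4: conclusion.} Apply Lemma \ref{analytic_det} to each $\Delta_p$: the map $\rho \mapsto \detn_{\neu\pi}\Delta_p(\rho)$ is real-analytic on $U$. Formula \eqref{defn_tors_alg} expresses $\tau^{(2)}(M,\rho)^2$ as a finite product of powers of these determinants, so $\tau^{(2)}(M,\rho)$ is real-analytic on $U$, proving the theorem.

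\textbf{Main obstacle.} The genuinely non-formal step is Step 3: the existence of a spectral gap at $\rho_0$ is not a soft consequence of the general set-up, it relies on the strong acyclicity result for the holonomy representation. Once this is granted, the transfer to a neighbourhood is purely a continuity argument and the real-analyticity is a direct application of the technical Lemma \ref{analytic_det} developed in Section \ref{sec:L2}.
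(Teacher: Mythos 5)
Your overall architecture matches the paper's: untwist to a family of $\neu\pi$-equivariant operators on a fixed Hilbert module depending analytically on $\rho$ (this is Lemma \ref{fixedspace}), establish a spectral gap at $\rho_0$, propagate it by Lemma \ref{spec_continuity}, and conclude with Lemma \ref{analytic_det} and the section of Lemma \ref{section}. The genuine gap is in your Step 3. The Bergeron--Venkatesh strong acyclicity result \cite{BV} concerns the \emph{analytic} (Hodge--de Rham) Laplacian acting on $\rho_0$-twisted differential forms, and is stated for \emph{compact} locally symmetric spaces; it says nothing directly about the \emph{combinatorial} $\ell^2$-Laplacians $\Delta_p(\rho_0)$ of the cellular chain complex $C_*^{(2)}(M,\neu\pi\otimes\rho_0)$, which are the operators whose Fuglede--Kadison determinants enter \eqref{defn_tors_alg}. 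A homotopy equivalence between the de Rham and cellular complexes preserves qualitative data (Novikov--Shubin invariants, determinant class) but does not by itself transfer a quantitative spectral gap. Bridging this is the main technical content of the paper's proof: (i) the analytic gap is first extended from compact quotients to $\HH^3$ itself (Lemma \ref{gap_complete}, by a limiting argument over compact manifolds with injectivity radius tending to infinity, since \cite[Lemma 4.1]{BV} nominally covers only the compact case); (ii) the \emph{relative} combinatorial cochain complex $C^*_{(2)}(M,\partial M,\neu\pi\otimes\rho_0)$ is compared with Sobolev de Rham complexes on $\HH^3$ via Whitney maps satisfying the two-sided bounds \eqref{lipschitz}, so that Rayleigh quotients, and hence $\inf\sigma$ via \eqref{rayleigh2}, are controlled; (iii) Poincar\'e duality then carries the gap from the relative cochain Laplacians back to the chain Laplacians $\Delta_p$. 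Because $M$ has toroidal boundary, the relative complex and the boundary behaviour of the Whitney forms cannot be sidestepped, and citing Wa{\ss}erman's unpublished thesis for ``the cusped setting'' does not supply this combinatorial gap either: that work concerns the analytic computation of the twisted torsion, not a spectral comparison for cellular Laplacians. Without some version of (i)--(iii), your Step 3 is an assertion, not a proof.

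A smaller point on Step 2: in the diagonal-action model the differential of $C_*^{(2)}(M,\neu\pi\otimes\rho)$ is $\partial_p\otimes\mathrm{id}$ and does not depend on $\rho$ at all; the $\rho$-dependence you invoke appears only after conjugating by the untwisting isomorphisms $I_p(\rho)$ of Lemma \ref{fixedspace}, which is precisely what produces an analytic family $A_p(\rho)$ of $\neu\pi$-equivariant operators on a \emph{fixed} module to which Lemma \ref{analytic_det} can be applied. As written, your Step 2 conflates the two models; this is easily repaired, but it must be said. Steps 1 and 4 are correct and coincide with the paper's argument.
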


In Subsection~\ref{subsec:Comb} and \ref{subsec:L2torsion} we give alternative definitions for the $L^2$-torsion  $\todeux$, which allows us to work with operators on a fixed Hilbert space (Lemma \ref{fixedspace}) and for which we have a spectral gap (Lemma \ref{gap}) that allows us to apply Lemma \ref{analytic_det} to deduce Theorem \ref{theo:Analyticity}. We give the proof of Lemma \ref{gap} in Subsection \ref{subsec:Technical}, using comparison with the analytic $L^2$-invariants and the spectral gap property of the holonomy representation established in \cite[Lemma 4.1]{BV}. 

%%%%%%%%%%%%%%%%%%%%%%%%%%%%%%

\subsection{Combinatorial Laplacians} \label{subsec:Comb}

\begin{lemma} \label{fixedspace}
  There exists a graduated Hilbert $\neu\pi$-module $V = V_0 \oplus \cdots \oplus V_3$ and functions $D_p : R(M) \to \mathcal \mathrm{Hom}_{\neu\pi}(V_p, V_{p-1})$, $p = 1, \ldots, 3$, which are regular in a Zariski-neighbourhood of $\rho_0$ in $R(M)$ and such that for every $\rho$ the complex $(V_*, D_*(\rho))$ is isomorphic to $C_*^{(2)}(M, \neu\pi \otimes \rho)$.

  In particular, if we set $A_p = D_p^*D_p + D_{p+1}D_{p+1}^*$ then $\rho \mapsto A_p(\rho)$ are analytic in a neighbourhood of $\rho_0$ and we have 
  \begin{equation} \label{defn2_todeux}
  \todeux = \prod_{p=1}^3 \det{}_{\neu\pi}\left(A_p(\rho)\right)^{p(-1)^p}.
  \end{equation}
\end{lemma}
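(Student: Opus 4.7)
The plan is to exhibit $V_p$ and $D_p$ by making explicit basis choices, so that the dependence on $\rho$ is concentrated in the matrix entries of the differentials while the ambient Hilbert module stays fixed.

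First, fix a lift $\widetilde{e}_p^i$ to $\widetilde M$ of each $p$-cell of $M$, yielding a $\CC\pi$-basis of $C_p(\widetilde M)$ of cardinality $n_p$. Combined with the standard basis of $\CC^2$, this provides an identification of the underlying $\CC\pi$-module of $C_p(\widetilde M)\otimes_{\CC}\CC^2$ (with its $\rho$-twisted action as recalled in Subsection~\ref{subsec:twisting}) with $\CC\pi^{2n_p}$. Taking the $L^2$-completion yields $V_p := \ell^2(\pi)^{2n_p}$, a Hilbert $\neu\pi$-module manifestly independent of $\rho$; by construction the resulting complex is isomorphic to $C_*^{(2)}(M,\neu\pi\otimes\rho)$.

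Next, observe that the cellular differential of $C_*(\widetilde M)$ is represented by a matrix $\partial_p\in M_{n_{p-1},n_p}(\ZZ\pi)$, each of whose entries is a $\ZZ$-linear combination of finitely many group elements $\gamma_1,\ldots,\gamma_N\in\pi$ (independent of $\rho$). Under the identification above, the twisted boundary is obtained by replacing every occurrence of $\gamma_k$ by the product of $\gamma_k$ acting on $\CC\pi$ with the $2\times 2$ block $\rho(\gamma_k)$; this produces a matrix $D_p(\rho)\in M_{2n_{p-1},2n_p}(\CC\pi)$ whose entries are polynomial in the coordinate functions $\rho\mapsto\rho(\gamma_k)_{ij}$. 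Since left-multiplication by such a matrix commutes with the right-regular $\neu\pi$-action on $\ell^2(\pi)^{2n_p}$, $D_p(\rho)\in\Hom_{\neu\pi}(V_p,V_{p-1})$; and since the $\rho(\gamma_k)_{ij}$ are regular functions on the affine variety $R(M)$, the map $\rho\mapsto D_p(\rho)$ is regular on all of $R(M)$. In particular it is real-analytic in a neighbourhood of $\rho_0$, and the same therefore holds for $A_p(\rho) = D_p(\rho)^*D_p(\rho) + D_{p+1}(\rho)D_{p+1}(\rho)^*$, whose adjoints are computed entrywise via the involution of $\CC\pi$.

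Finally, the formula for $\todeux$ is a direct transcription of \eqref{defn_tors_alg}, the operators $A_p(\rho)$ being by definition the combinatorial Laplacians \eqref{defn_comb_lapl} of the complex $(V_*,D_*(\rho))$. There is no real obstacle in the proof; its content is conceptual rather than technical, namely that after these choices the $\rho$-dependence has been isolated in bounded operators acting on a fixed Hilbert $\neu\pi$-module, which is precisely the setting needed to apply the real-analyticity criterion of Lemma \ref{analytic_det} once a spectral gap for $A_p(\rho)$ is established in Lemma \ref{gap}.
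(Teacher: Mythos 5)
Your proposal is correct and follows essentially the same route as the paper: fix $\CC\pi$-bases coming from lifted cells and a basis of $\CC^2$, identify the twisted complex with a $\rho$-independent Hilbert $\neu\pi$-module, and observe that the differentials become matrices over $\CC\pi$ whose entries depend regularly (polynomially) on the entries of $\rho$, after which \eqref{defn2_todeux} is just \eqref{defn_tors_alg} transported through a basis-preserving (hence unitary) isomorphism. The paper packages the same computation via the explicit intertwiner $I_p(\rho)\colon \gamma e\otimes v\mapsto \gamma e\otimes\rho(\gamma)v$ and sets $D_p(\rho)=I_{p-1}(\rho)^{-1}\partial_p I_p(\rho)$, which is exactly your substitution $\gamma_k\mapsto \gamma_k\otimes\rho(\gamma_k)$ in the integral boundary matrices.
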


\begin{proof}
  We set
  \[
  L_p = C_p(\wdt M) \otimes_\CC \CC^2
  \]
  where $\pi$ acts by $\gamma \cdot (e \otimes v) = (\gamma e) \otimes v$. We choose an arbitrary $\CC\pi$-base $B_p$ for $C_p(\wdt M)$; then $L_p$ is isomorphic (as a $\CC\pi$-module) to $C_p(\wdt M) \otimes \CC^2$ with diagonal action (as introduced in \ref{subsec:twisting}) via the map $I_p(\rho) : \gamma e \otimes v \mapsto \gamma e \otimes \rho(\gamma)v$ for $e \in B$, $\gamma \in \pi$. Choosing any base of $\CC^2$ we get a basis of $L_p$, and the completion $V_p = \Lambda L_p$ is isomorphic to $C_p^{(2)}(M, \neu\pi \otimes \rho)$ (see also \cite[Lemma 1.1]{Lueck_twisted}). 

  Let $\partial_p$ be the differentials of $C_*(M, \neu\pi \otimes \rho)$, which are given by $\partial_p(e \otimes v) = \partial_p e \otimes v$. The corresponding boundary maps of $V_*$ are given by $D_p(\rho) = I_{p-1}^{-1}(\rho) \partial p I_p(\rho)$. The coefficients of $I_p(\rho)$ in the $\CC\pi$-bases of $L_*, C_*(\wdt M) \otimes \CC^2$
  %\commentleo{Replace $V_*$ by $L_*$ or  $\CC\pi$-bases of $V_*, C_*(\wdt M) \otimes \CC^2$ by $\ell^{(2)}\pi$-bases of $V_*, C_*^{(2)}(M,\neu\pi \otimes \rho)$} 
  are rational functions of $\rho$ (they depend only on the coefficients of a finite number of $\rho(\gamma)$), so the first part of the lemma is proven. 
  
  Obviously $A_p = I_p^{-1} \Delta_p I_p$ and \eqref{defn2_todeux} follows.
  %\commentleo{added}
\end{proof}

%%%%%%%%%%%%%%%%%%%%%%%%%%%%%%

\subsection{$L^2$-torsion} \label{subsec:L2torsion}

\subsubsection{$L^2$-cochain complexes} We define in the same way as in \ref{subsec:twisting} the $L^2$-cochain complexes $C_{(2)}^*(M, \neu\pi \otimes \rho)$ as the completion of the complex $C^*(\widetilde M) \otimes \CC^2$ with diagonal action (and similarly for $C_{(2)}^*(\partial M, \neu\pi \otimes \rho)$). The relative $L^2$-cochain complex $C_{(2)}^*(M, \partial M, \neu\pi \otimes \rho)$ is then defined by the exact sequence
\[
0 \to C_{(2)}^*(M, \partial M, \neu\pi \otimes\rho) \to C_{(2)}^*( M, \neu\pi \otimes\rho) \xrightarrow{i^*} C^*_{(2)}(\partial M, \neu\pi \otimes \rho\vert_{\partial M}) \to 0. 
\]
We denote by $\Delta_\rel^p \colon C_{(2)}^p( M, \partial M, \neu\pi \otimes \rho) \to C_{(2)}^p( M, \partial M, \neu\pi \otimes \rho) $ the combinatorial Laplacians of the complex $C_{(2)}^*( M, \partial M, \neu\pi \otimes \rho)$. The crucial point for us is then the following, which we prove in the next subsection.  

\begin{lemma} \label{gap}
  There exists $\lambda_0 > 0$ and a neighborhood $U$ of the holonomy such that for all $[\rho] \in U$, for all $p=0, \ldots 3$, we have $\sigma(\Delta_\rel^p(\rho)) \subset \interval f{\lambda_0}{+\infty}o$. 
\end{lemma}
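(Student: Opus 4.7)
The plan is in two steps: first establish a spectral gap for the combinatorial relative Laplacian at the holonomy $\rho_0$ itself, then extend it to a neighborhood by a continuity-of-spectrum argument for which Lemma \ref{spec_continuity} is the main tool.

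For the continuity-in-$\rho$ machinery, I would first run the construction of Lemma \ref{fixedspace} in the relative setting: starting from a $\CC\pi$-basis of the relative cochain complex $C^*(\widetilde M, \partial\widetilde M)$ and the twisting isomorphisms $I_p(\rho)$, one obtains a fixed graduated Hilbert $\neu\pi$-module $W^* = W^0 \oplus \cdots \oplus W^3$ together with coboundary operators $D_\rel^p(\rho)$ that are rational (hence real-analytic) in $\rho$ on a Zariski-neighborhood of $\rho_0$, and such that the resulting complex is isomorphic to $C^*_{(2)}(M, \partial M, \neu\pi \otimes \rho)$. Transporting $\Delta_\rel^p(\rho)$ to this fixed module yields operators $A_\rel^p(\rho) \in \mathcal B_{\neu\pi}(W^p)$ that depend real-analytically on $\rho$, hence in particular continuously in operator norm. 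Granting a spectral gap $\lambda_1 > 0$ at $\rho_0$, Lemma \ref{spec_continuity} applied pointwise in each degree then immediately yields an open neighborhood $U$ of $[\rho_0]$ on which $\sigma(A_\rel^p(\rho)) \subset [\lambda_1/2, +\infty)$ for all $p=0,\ldots,3$; setting $\lambda_0 = \lambda_1/2$ gives the conclusion.

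The remaining ingredient, the spectral gap of $\Delta_\rel^p(\rho_0)$ itself, is obtained by transferring to the combinatorial setting the analytic spectral gap supplied by Bergeron--Venkatesh's strong acyclicity \cite[Lemma 4.1]{BV}: the analytic Hodge Laplacians acting on $L^2$-sections of the flat bundle $\widetilde M \times_{\rho_0} \CC^2$ over $\widetilde M$ have spectrum bounded away from $0$ in every degree, essentially because a Weitzenböck identity combined with the non-unitarity of $\rho_0$ produces a strictly positive curvature term. The bridge between the analytic Laplacian and $\Delta_\rel^p(\rho_0)$ is provided by a Dodziuk-type chain-homotopy equivalence of Hilbert $\neu\pi$-modules between the smooth and combinatorial $L^2$-cochain complexes, applied to the compact core of $M$ with the natural relative boundary conditions along the toral cusp cross-sections.

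The main obstacle will be making this last comparison precise in the finite-volume cusped, non-unitarily twisted setting. One needs a $\pi$-equivariant triangulation compatible with the cusp structure, a Hilbert $\neu\pi$-module refinement of Dodziuk's theorem with twisted coefficients on a manifold with boundary, and a control of the behaviour at infinity ensuring that the relative combinatorial complex genuinely computes the same thing as the strongly acyclic analytic complex. The use of the relative complex is precisely what makes this plausible, because the analytic contribution of the cusps, governed by the parabolic restrictions $\rho_0|_{\pi_1(T_i)}$, is killed by imposing the relative boundary condition, leaving only the interior cohomology where the Bergeron--Venkatesh gap applies without modification. Once the comparison and the gap at $\rho_0$ are in place, the first paragraph completes the proof.
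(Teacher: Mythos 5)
Your overall architecture matches the paper's: an analytic spectral gap coming from Bergeron--Venkatesh strong acyclicity, a transfer of that gap to the combinatorial relative Laplacian at $\rho_0$, and then an extension to a neighborhood via the fixed-Hilbert-module trick of Lemma \ref{fixedspace} and the continuity of $\inf\sigma$ from Lemma \ref{spec_continuity}. The first and last steps of your plan are essentially the paper's. But the middle step --- which is the technical heart of Lemma \ref{gap} --- is not actually proved in your proposal: you yourself flag the ``Dodziuk-type chain-homotopy equivalence of Hilbert $\neu\pi$-modules between the smooth and combinatorial $L^2$-cochain complexes, with twisted coefficients, on a manifold with boundary'' as the main obstacle and leave it as a list of desiderata. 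In the non-unitary, finite-volume, relative setting no such off-the-shelf theorem is available, and producing one (equivariant triangulation compatible with the cusps, control at infinity, boundedness of both the Whitney and de Rham maps and of the homotopies in the twisted Sobolev norms) would be substantially harder than the lemma itself. So as written the proposal has a genuine gap precisely where the work lies.

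The paper's route through this step is both different and lighter than what you ask for. It does not need a homotopy equivalence at all: since only a lower bound on the combinatorial Rayleigh quotients is required, it suffices to have a bounded chain map $W$ from the relative combinatorial complex into smooth forms that is also bounded below, i.e.\ the two-sided estimate \eqref{lipschitz}. The Whitney map does this: relative cochains are sent to forms compactly supported in the interior of $\wdt M\subset\HH^3$, so no analytic boundary condition on the cusp cross-sections is ever imposed (contrary to your plan); one then feeds $W^p\phi$ as a test form into the variational characterization \eqref{rayleigh2} of $\inf\sigma(\Delta_p)$ on $\HH^3$ and uses Lemma \ref{gap_complete}. The lower bound in \eqref{lipschitz} is elementary: on the pairwise disjoint open sets $\widetilde U_\sigma$ where the partition of unity localizes, $W^p(f_\sigma\otimes v)$ is a nonvanishing multiple of $de_{c_1}\wedge\cdots\wedge de_{c_p}\otimes v$, and $\pi$-equivariance plus finiteness of cells modulo $\pi$ give a uniform constant. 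Two further points you gloss over: \cite[Lemma 4.1]{BV} is stated for compact locally symmetric spaces, so the paper bootstraps the gap to $\HH^3$ via a sequence of compact manifolds with injectivity radius tending to infinity and weak convergence of spectral measures (your Weitzenb\"ock phrasing is also not how the bound is obtained there --- it is a Casimir eigenvalue argument); and the norm comparison needs the remark that Sobolev and $L^2$ norms are equivalent on the image of $W$. If you replace your appeal to a twisted Dodziuk theorem by this one-sided Whitney-map estimate, your proof closes.
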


%definition using relative chains/absolute cochains% We could also consider the $L^2$-cochain complex $C^*(\wdt M, \rho)$. An adaptation of the arguments in \cite{Lott_Lueck} shows that it is of determinant class for $\rho$ in $U$ (it seems however that this complex does not have all its Novikov--Shubin invariants equal to $\infty^+$). We can then use the exact sequence associated to the pair $(\wdt M, \partial\wdt M)$, and Poincaré duality (\cite[Theorem 6.7(7)]{Lueck_twisted}) to prove that the $L^2$-torsion thus defined is equal to $\todeux$. 

%%%%%%%%%%%%%%%%%%%%

\subsubsection{Poincar\'e duality} 
We follow the references \cite{Lueck} (see the proofs of Theorems 1.36(3) and 3.93(3)) and \cite[Theorem 6.7(7)]{Lueck_twisted}. There is a homotopy equivalence $P_*\colon C_*(\wdt M) \to~C^{3-*}(\wdt M, \partial\wdt M)$ (the untwisted chain and relative cochain complexes); a construction is given in the proof of Theorem 2.1 in \cite{Wall_surgery_book}. We can extend it by the identity to a homotopy equivalence
$$P_*  \colon C_*^{(2)} (M,\neu\pi \otimes \rho ) \to~C^{3-*}_{(2)}(M, \partial M,\neu\pi \otimes \rho)$$ between the completed twisted complexes. 
%The Whitehead torsion of $P_*$ vanishes by \cite[Theorem 2.1]{Wall_surgery_book} and it follows that the complexes $ C_*^{(2)} (M,\neu\pi \otimes \rho )$ and $C^{3-*}_{(2)}(M, \partial M,\neu\pi \otimes \rho)$ are simple-homotopy\commentleo{Added simple, is that what you meant?} equivalent.
By \cite[Theorem 2.19]{Lueck} it follows that they have the same Novikov--Shubin invariants. In particular $\Delta_p(\rho)$ has a spectral gap for any $\rho \in U$ (where $U$ is the neighbourhood of $\rho_0$ given by Lemma \ref{gap}). 

%%%%%%%%%%%%%%%%%%%%

\subsubsection{Proof of Theorem \ref{theo:Analyticity}} The relative cochain complex $C^*_{(2)}(M, \partial M, \neu \pi \otimes \rho)$ is of determinant class for $\rho$ in the neighbourhood $U$ given by Lemma \ref{gap}. By Poincaré duality (see preceding paragraph) we get that $C_*^{(2)}(\wdt M, \rho)$ is of determinant class and that for $\rho \in U$, $\inf\sigma(\Delta_p(\rho)) > 0$, and by Lemma \ref{fixedspace} we get that $(V_*, D_*(\rho))$ are determinant-class and the $A_p(\rho)$ have a spectral gap for $\rho \in U$. 

The hypotheses of Lemma \ref{analytic_det} are thus satisfied by $[\rho] \mapsto A_p(\rho)$ where $\rho$ belongs to the section of the map $R(M) \to X(M)$ given by Lemma \ref{section}. It follows that the functions $[\rho] \mapsto \det_{\neu\pi}A_p(\rho)$ are real-analytic in $U$ and so is the $L^2$-torsion by \eqref{defn2_todeux}.

%%%%%%%%%%%%%%%%%%%%%%%%%%%%%%%%%%%%%%%%%%%%%%%%%%%%%%%%%%%%

\subsection{Proof of lemma \ref{gap}} \label{subsec:Technical}

\subsubsection{Preliminaries on analytic $L^2$-invariants}
In this subsection we define the twisted analytic Laplacian operators and derive some of their properties. For our purposes here we need only consider twisting by the holonomy representation $\rho_0$ but the discussion can be adapted to deal with any $\rho$. 

We consider the trivial rank 2 bundle $\HH^3 \times \mC^2$ with the $\pi_1(M)$-action
\[
\gamma \cdot (\widetilde x, v) = (\gamma \cdot \widetilde w, \rho_0(\gamma) \, v)
\]
for any $\gamma \in \pi_1(M), \widetilde x \in \HH^3, v \in \CC^2$, and the associated complex $\Omega_c^*(\HH^3, \CC^2)$ of compactly supported, $\CC^2$-valued forms on $\HH^3$ with the natural $\Gamma$-equivariant differential $d_p \colon \Omega^p(\HH^3, \CC^2) \to \Omega^{p+1}(\HH^3, \CC^2)$. On the bundle $\HH^3 \times \CC^2$ we choose an arbitrary $\SL_2(\CC)$-invariant norm (which amounts to choosing a norm on $\CC^2$). 

The spaces $\Omega_c^p(\HH^3, \CC^2)$ admit a Hermitian product associated with this metric on $\HH^3 \times \CC^2$. We will use a uniform notation for various spaces of differential forms associated with this inner product: if $X$ is a manifold, a prefix $R$ before $\Omega^p(X, \rho)$ indicates that we consider the completion associated to $\rho$ of the $p$-forms with coefficients in the trivial bundle  %\commentleo{What is associated to $\rho$ is the completion. I feel like it is not clear enough from this sentence.} 
and regularity $R$ (for example $L^2\Omega^p$ for square-integrable forms, $H^l\Omega^p$ for Sobolev spaces---note that these Hilbert spaces and their $\neu\pi$-module structure depend on $\rho$). The norm on any $R\Omega^p(X, \rho)$ will be denoted by $\|\cdot\|_R$. For the Sobolev spaces here and after, an integer $l > \dim M =3$ is fixed.

The Hodge Laplacian $\Delta_p$ on the space $L^2\Omega^p(\HH^3, \rho)$ is defined as follows: if $d_p^*$ is the formal adjoint of $d_p$ then the operator defined by $\Delta_p = d_p^*d_p + d_{p+1}d_{p+1}^*$ on the space $\Omega_c^p(\HH^3, \CC^2)$ of compactly supported smooth forms extends to a unique essentialy self-adjoint operator on the completion $L^2\Omega^p(\HH^3, \rho)$. This operator has a well-defined spectrum $\sigma(\Delta_p)$ and it follows from \eqref{rayleigh} then we have
\begin{equation} \label{rayleigh2}
  \inf \sigma(\Delta_p) = \min\left( \inf_{\omega\in \Omega_c^p(\HH^3, \CC^2) \setminus \{0 \}} \frac{\|d_p\omega\|_{L^2}^2}{\|\omega\|_{L^2}^2}, \inf_{\omega\in \Omega_c^{p+1}(\HH^3, \CC^2) \setminus \{0 \}} \frac{\|d_{p+1}\omega\|_{L^2}^2}{\|\omega\|_{L^2}^2} \right). 
\end{equation}
The following result essentially follows from \cite[Lemma 4.1]{BV}; we give a short justification as this reference nominally covers only compact locally symmetric spaces\footnote{The statement of \cite[Lemma 4.1]{BV} is given only for the discrete spectrum but the argument also applies to continuous spectrum as the representations in the packets contributing to the spectrum on $p$-forms also satisfy the condition $\mathrm{Hom}_K(\wedge^p\mathfrak p^* \otimes \CC^2, \cdot) \not= 0$ which is the only point used to give a lower bound on the Casimir eigenvalue. Rigorously justifying this would take too much time so we give a suboptimal bootstrap argument below.}. 

\begin{lemma} \label{gap_complete}
  There exists $\delta > 0$ such that for the holonomy $\rho_0$ and $0 \le p \le 3$ we have $\sigma(\Delta^p(\rho_0)) \subset \interval f\delta{+\infty}o$. 
\end{lemma}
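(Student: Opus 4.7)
The plan is to reduce the spectral gap question to the setting of Bergeron--Venkatesh \cite{BV} and then handle the noncompactness of $M$ via a bootstrap. The first step is to identify the spectrum of $\Delta^p(\rho_0)$ on $\pi_1(M)$-equivariant $L^2$-forms on $\HH^3$ valued in the twisted bundle with the spectrum of the twisted Hodge Laplacian on sections of the associated flat bundle $E_{\rho_0}$ over $M$: although $\rho_0$ is not unitary, the norm we chose on $\CC^2$ is $K$-invariant, so the fiberwise norm descends to $M$ and the usual $\pi_1(M)$-equivariance identifications apply (at the level of Hermitian vector bundles with a flat connection that is not metric). Writing $G = \SL_2(\CC)$, $K = \SU(2)$ and $\mathfrak{p}$ for the $(-1)$-eigenspace of the Cartan involution $\theta$, one then has $L^2\Omega^p(\HH^3, \rho_0) \cong \left( L^2(\pi_1(M)\backslash G) \otimes \bigl(\wedge^p \mathfrak{p}^* \otimes \CC^2\bigr) \right)^K$, and up to a bounded curvature term, $\Delta^p(\rho_0)$ acts on an irreducible summand with the eigenvalue of the Casimir operator.

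The second step is to apply the Bergeron--Venkatesh argument to each irreducible unitary representation $\pi$ of $G$ which appears in the $L^2$-decomposition of $L^2(\pi_1(M) \backslash G)$ and satisfies $\Hom_K\bigl(\wedge^p \mathfrak{p}^* \otimes \CC^2, \pi\bigr) \neq 0$. The key representation-theoretic input is that the standard representation $V = \CC^2$ of the real group $G = \SL_2(\CC)$ is \emph{not} isomorphic to its $\theta$-twist $V^\theta \cong \overline{V}$; this excludes the trivial representation and the complementary series close to it from the set of admissible $\pi$, and gives a uniform lower bound $\delta_0 > 0$ on the Casimir eigenvalue which depends only on $G$ and $V$, not on the lattice $\pi_1(M)$. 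Combined with the Hodge--Casimir comparison on the finitely many $K$-types appearing in $\wedge^p \mathfrak{p}^* \otimes \CC^2$, this yields a spectral gap $\delta > 0$ for the contribution of such $\pi$ to $\Delta^p(\rho_0)$ on the \emph{discrete} $L^2$-spectrum.

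The third and hardest step is to extend the gap to the \emph{continuous} spectrum, which arises here because $M$ has $\mu \ge 1$ cusps. Two approaches are available. The clean one is to note that the representations occurring in the Eisenstein packets are induced from characters of a cuspidal parabolic $P \subset G$, and that the $K$-type condition $\Hom_K(\wedge^p \mathfrak{p}^* \otimes \CC^2, \cdot)\neq 0$ rules out exactly the inducing data that would give small Casimir eigenvalue; this is what the footnote to Lemma~\ref{gap_complete} alludes to. Since this would require redoing parts of \cite{BV} for the Eisenstein spectrum, the alternative bootstrap is to decompose $M = M_0 \cup \bigcup_{i=1}^\mu C_i$ with $M_0$ compact and $C_i \cong T^2 \times [R, +\infty)$, use a cutoff argument to reduce to two sub-problems: (a) on $M_0$, the compact case of \cite[Lemma~4.1]{BV} applies; (b) on each cusp $C_i$, one computes the spectrum of the twisted Laplacian explicitly by Fourier decomposition on $T^2$, using that $\rho_0$ restricted to $\pi_1(T^2)$ lands in a conjugate of the unipotent subgroup and therefore has no invariant vector in $\CC^2$, which forces every Fourier mode to contribute at least a fixed positive eigenvalue.

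The main obstacle is precisely step three: rigorously patching together the compact-core bound from \cite{BV} with the explicit cusp computation to obtain a uniform $\delta > 0$ valid on all of $L^2\Omega^p(\HH^3, \rho_0)$. The bootstrap argument is expected to give a suboptimal constant, but any positive $\delta$ is enough to feed into Lemma~\ref{analytic_det} and conclude Theorem~\ref{theo:Analyticity}; in particular no sharpness of the gap will be needed downstream.
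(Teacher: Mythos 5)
Your proposal is aimed at the wrong operator, and this changes the nature of the whole problem. In the paper $\Delta^p(\rho_0)$ is defined on $L^2\Omega^p(\HH^3,\rho_0)$, the completion of the compactly supported $\CC^2$-valued forms on $\HH^3$ itself: this is the universal-cover (von Neumann) setting needed for $L^2$-torsion, and the correct homogeneous-space model is $\bigl(L^2(G)\otimes\wedge^p\mathfrak p^*\otimes\CC^2\bigr)^K$ with $G=\SL_2(\CC)$, $K=\SU(2)$, not $\bigl(L^2(\pi_1(M)\backslash G)\otimes\wedge^p\mathfrak p^*\otimes\CC^2\bigr)^K$ as in your first step. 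Consequently there are no cusps, no Eisenstein series and no compact-core-plus-cusp decomposition anywhere in this lemma: the ``continuous spectrum'' alluded to in the footnote is the tempered Plancherel spectrum of $G$ occurring in $L^2$-forms on $\HH^3$, not the continuous spectrum of the finite-volume quotient $M$. Your third step -- which you yourself single out as the hardest and leave unfinished -- therefore addresses a different and substantially harder problem (the spectral analysis of the twisted Laplacian on a cusped quotient), and even if it were carried out it would not directly yield the statement about $\HH^3$ without an additional comparison between the bottom of the spectrum on the quotient and on its universal cover, which is not automatic for operators on forms twisted by a non-unitary flat bundle. Moreover your explicit cusp computation rests on a false claim: a nontrivial unipotent element of $\SL_2(\CC)$ does fix a nonzero vector of $\CC^2$, so the peripheral holonomy certainly does not act without invariant (or $\pm 1$-eigen-) vectors, and the Fourier-mode argument as described would not close.

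The paper's actual proof is much softer. By \cite[Lemma 4.1]{BV} (whose representation-theoretic hypothesis is satisfied by the standard representation, as your second step correctly records) the gap $\delta>0$ is uniform over all \emph{closed} hyperbolic $3$-manifolds, since it depends only on a Casimir lower bound on the representations with the relevant $K$-types. To transfer this to $\HH^3$ one takes a sequence of closed hyperbolic $3$-manifolds $M_n$ with injectivity radius tending to infinity (a tower of finite covers of a fixed closed manifold, which exists by residual finiteness), invokes the trace-formula argument of \cite{deGeorge_Wallach} to get weak convergence of the normalized spectral measures of the twisted Laplacians on $M_n$ to the spectral measure of $\Delta^p(\rho_0)$ on $\HH^3$, and notes that a uniform gap $[\delta,+\infty)$ for the supports persists under weak limits. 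If you want to salvage your outline, replace steps one and three by this approximation argument; alternatively one could try to justify the footnote's claim directly by bounding the Casimir on the principal-series packets appearing in the Plancherel decomposition, but no analysis of the cusps of $M$ is needed at this point of the paper.
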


\begin{proof}
  Strictly speaking \cite[Lemma 4.1]{BV} (note that the standard representation of $\SL_2(\CC)$ to itself satisfies the hypothesis therein) applies only to the spectrum of compact hyperbolic 3--manifolds. However, by a standard argument using the trace formula (see for example \cite{deGeorge_Wallach}) the normalised spectral measures of a sequence of compact manifolds $M_n$ with $\mathrm{inj}(M_n) \to +\infty$ converge weakly to the spectral measure for $\HH^3$. The spectral gap persists in the weak limit and the existence of such a sequence $M_n$ (which follows from the residual finiteness of the fundamental group of  hyperbolic 3--manifold) guarantees the spectral gap for $\HH^3$. 
\end{proof}

%%%%%%%%%%%%%%%%%%%%%%%%%%%%%%

\subsubsection{Comparison of analytic and combinatorial invariants for the holonomy} \label{preceding}

The main step in the proof of Lemma \ref{gap} is now to prove the spectral gap for $\rho = \rho_0$. We compare the spectra of Laplace operators on $C_{(2)}^*(\wdt M, \partial\wdt M, \neu \pi \otimes \rho)$ and on the Sobolev complexes $H^{l-*}\Omega^*(\HH^3, \rho)$ using Whitney maps. Our arguments are essentially those leading to \cite[Lemma 2.71]{Lueck} which we adapt to the twisted setting. We will use the notation $\|\cdot\|_{\ell^2}$ for the norm on the cochain complex $C_*^{(2)}(\wdt M, \partial\wdt M, \neu\pi \otimes \rho)$, and by $\| \cdot \|_{\wdt x}$ for the norm of the fiber above a point $\wdt x$ in $\HH^3$.

First we recall the definition of Whitney maps. Using the analoguous of Lemma \ref{fixedspace} for cohomology, we use a different (although isomorphic) model for the cochain complex $C^*(M, \neu \pi \otimes \rho)$. We see it as the completion of the complex $ C^*(\wdt M) \otimes \CC^2$ but with action $\gamma\cdot(f\otimes v) = (\gamma f) \otimes (\rho(\gamma)v)$ and differentials $d(f\otimes v) = (df) \otimes v$. If we fix a $\CC\pi$-basis of $C_p(\wdt M)$ the map $\gamma f \otimes v \mapsto \gamma f \otimes \rho(\gamma)v$ gives an isomorphism of $\CC\pi$-complexes from our former model to this one. We choose a smooth partition of unity $e_c$ on $M$, indexed by vertices $c$ of the triangulation and we lift it to $\wdt M$. The important property that we require is that $e_c$ has its support contained in the open simplices adjacent to $c$. Then, if $f_\sigma$ is the cochain with value 1 on $\sigma$ and $0$ on other simplices we define
\[
W^p(f_\sigma \otimes v) = p! \sum_{c \in \sigma} \omega_{\sigma, c} \otimes v
\]
where $\omega_{\sigma, c}$ is defined by the usual formula: if $c_0, \ldots, c_p$ are the vertices of $\sigma$ ordered according to orientation, $\omega_{\sigma, c_i} = (-1)^ie_{c_i} \cdot \bigwedge_{j \not= i} de_{c_j} $. The form $W^pf_\sigma\otimes v$ is supported in the open star of $\sigma$; in particular it is compactly supported in $\wdt M \setminus \partial\wdt M$ if $\sigma$ is not contained in $\wdt \partial M$. 
  
Thus restricting $W$ defines a map $C^*(\wdt M, \partial\wdt M) \otimes \CC^2  \to C_c^\infty\Omega^*(\HH^3, \CC^2)$, and a direct computation (see \cite[p. 140]{Whitney}) shows that it is a chain map---note that in the model we use here, on both sides the differentials satisfy $d(f\otimes v) = df\otimes v$, $d(\omega\otimes v) = d\omega\otimes v$ so we can use this computation as it is for our twisted case. This map from a $\CC\pi$-module of finite rank extends to a bounded chain map
$$W \colon C_{(2)}^*(M, \partial M, \neu \pi \otimes \rho) \to H^{l-*}\Omega^*(\HH^3, \rho).$$
 To prove our claim about the spectral gap of $C_{(2)}^*(\wdt M, \partial\wdt M, \rho)$ it suffices to prove the following inequalities: there exists $C, c > 0$ such that for all $p$ and any cochain $\phi \in C_{(2)}^p(\wdt M, \partial\wdt M, \neu\pi\otimes\rho)$ we have 
\begin{equation} \label{lipschitz}
c\|\phi\|_{\ell^2} \le \|W^p \phi\|_{H^{l-p}} \le C\| \phi \|_{\ell^2}.
\end{equation}
Indeed, if this holds then for any $\phi $ the Rayleigh quotient of $W^p \phi $ satisfies\[
\frac{ \|d(W^p\phi )\|_{H^{l-p-1}} } {\|W^p\phi \|_{H^{l-p}}} = \frac{ \|W^p(d\phi )\|_{H^{l-p-1}} } {\|W^p\phi \|_{H^{l-p}}} \le (c^{-1}C) \frac{\|d\phi \|_{\ell^2}} {\|\phi \|_{\ell^2}}
\]
and by Lemma \ref{gap_complete} and \eqref{rayleigh2} it follows that we must have $\frac{\|d\phi \|_{\ell^2}} {\|\phi \|_{\ell^2}} \ge \frac c C \delta$. As \eqref{rayleigh2} also applies to the combinatorial laplacian we get that $\inf \sigma(\Delta^p_\rel) \ge \frac c C \delta$. This will prove Lemma \ref{gap} for the holonomy representation $\rho_0$, after we show inequality \eqref{lipschitz}.

Since $W$ is bounded, the content of \eqref{lipschitz} is the minoration $c\|\phi \|_{\ell^2} \le \|W^p\phi \|_{H^{l-p}}$, which we will now prove. First we note (see \cite[Section 7]{Schick_notes}) that on the image of $W^p$ the Sobolev and $L^2$-norms are equivalent, so it suffices to show that $c\|\phi \|_{L^2} \le \|W^p\phi \|_{L^2}$. To do so, for a $p$-simplex $\sigma$ in $\widetilde M = \HH^3$ let $\widetilde U_\sigma$ be the maximal open subset of $\HH^3$ on which $\sum_{c \in \sigma} e_c = 1$ and no $e_c$ vanishes for $c \in \sigma$. This is nonempty, and we have $\widetilde U_\sigma \cap \widetilde U_\tau = \emptyset$ if $\tau \not= \sigma$ is another $p$-simplex. In addition, if $\sigma = [c_0, \ldots, c_p]$, replacing every instance of $e_{c_0}$ in $W^p(f _\sigma \otimes v)$ with $(1 - \sum_{i=1}^p e_{c_i}) \otimes v$ (which is valid on $\widetilde U_\sigma$ by definition) we get that
\[
W^p(f_\sigma \otimes v) = (de_{c_1} \wedge \cdots \wedge de_{c_p}) \otimes v \text{ on } \widetilde U_\sigma, 
\]
in particular it does not vanish there and so the integral of its norm on $\widetilde U_\sigma$ is strictly positive. As there are only finitely many $p$-simplexes modulo $\pi$ and the maps $e_{\gamma c}$ are $\pi$-equivariant we get that there exists $a > 0$ such that
\begin{equation} \label{mino_infnorm} 
\| W^p(f_\sigma \otimes v) \|_{L^2(U_\sigma)} \ge a \inf_{\widetilde x \in \widetilde U_\sigma} \|v\|_{\widetilde x}
\end{equation}
for all $\sigma$.

Let $\|\cdot\|$ be the norm on $\CC^2$, and $B$ the $\CC\pi$-basis of $C^*(\wdt M)$, that were used to define the norm $\|\cdot\|_{\ell^2}$ on $C^*_{(2)}(\wdt M, \partial \wdt M, \neu\pi \otimes \rho)$. For any $\sigma$ let $\gamma_\sigma$ the element of $\pi$ such that $\gamma_\sigma^{-1}\sigma$ belongs to $B$. Then by definition $\|f_\sigma \otimes v\|_{\ell^2} = \|\rho(\gamma_\sigma)^{-1} v\|$. As the $\widetilde U_\sigma$ are relatively compact and $B$ is finite, there is some $a'$ (independent of $\sigma$) such that $\inf_{\widetilde x \in \widetilde U_\sigma} \|v\|_{\widetilde x} > a' \|v\|$ for all $\sigma \in B$. As $\widetilde x \mapsto \|v\|_{\widetilde x}$ is $\pi$-invariant, it follows that for any $\sigma$ we have
\[
\inf_{\widetilde x \in \widetilde U_\sigma} \|v\|_{\widetilde x} > a' \|\rho(\gamma^{-1})v\| = a' \|f_\sigma \otimes v\|_{\ell^2}. 
\]
With \eqref{mino_infnorm} we finally get that 
\[
\| W^p(f_\sigma \otimes v) \|_{L^2(U_\sigma)} \ge a'' \|f_\sigma \otimes v\|_{\ell^2}. 
\]
for $a'' = aa' > 0$ and all $\sigma$. Now if $\phi$ is an arbitrary relative cochain we can write it as
\[
\phi = \sum_\sigma  b_\sigma f_\sigma \otimes v_\sigma
\]
so that we have
\[
\| \phi \|_{\ell^2}^2 = \sum_\sigma b_\sigma^2 \|f_\sigma \otimes v_\sigma\|_{\ell^2}^2
\]
As the $\widetilde U_\sigma$ are disjoint we get that 
\begin{align*}
  \| W^p(\phi) \|_{L^2}^2 &\ge \sum_\sigma \| W^p(\phi) \|_{L^2(U_\sigma)}^2 \\
  &= \sum_\sigma \| W^p(f_\sigma \otimes v) \|_{L^2(U_\sigma)}^2 \\
  &\ge (a'')^2 \sum_\sigma b_\sigma^2\|f_\sigma \otimes v\|_{\ell^2}^2 = (a'')^2\|\phi\|_{\ell^2}^2
\end{align*}
where the inequality on the second line follows from the fact that $W^p(f_\tau \otimes v)$ vanishes identically on $\widetilde U_\sigma$ whenever $\tau \neq \sigma$, and that on the third from the previous inequality. The last line is the inequality we were after.

\subsubsection{Conclusion of the proof of Lemma \ref{gap}}

By Lemma \ref{fixedspace} and the isomorphisms $V_* \to C_*^{(2)}(\wdt M, \neu\pi \otimes \rho) \to C_{(2)}^*(\wdt M, \partial\wdt M, \neu\pi \otimes \rho)$ the matrices of the operators $\Delta_p(\rho)$ have coefficients which vary continuously for $[\rho]$ in their domain of definition $D$. In particular each map $\Delta_\rel^p \colon D \to~\mathcal B(C_{(2)}^p(\wdt M, \partial\wdt M, \neu\pi \otimes \rho))$ are continuous for the operator norm. Lemma \ref{gap} then follows from Lemma \ref{spec_continuity} since it holds at $[\rho_0]$ by \ref{preceding} and Lemma \ref{gap_complete}.

%%%%%%%%%%%%%%%%%%%%%%%%%%%%%%%%%%%%%%%%%%%%%%%%%%%%%%%%%%%%
%\section{Analytic Laplacians, analytic torsion}

%\input{analytic_Laplacians}

%\section{Tentative computation of $L^2$-torsion} \label{sec:Computation}

%\input{computation}

%%%%%%%%%%%%%%%%%%%%%%%%%%%%%%%%%%%%%%%%%%%%%%%%%%%%%%%%%%%%%%%%%%%%%%%%%%%%%%%%

\bibliography{biblio}
\bibliographystyle{plain}

\end{document}